\definecolor{webgreen}{rgb}{0,.5,0}
\definecolor{webbrown}{rgb}{.6,0,0}
\newcommand{\seqnum}[1]{\href{https://oeis.org/#1}{\underline{#1}}}
\DeclareMathOperator{\hv}{\mathrm{hv}}
\DeclareMathOperator{\lv}{\mathrm{lv}}
\DeclareMathOperator{\V}{\mathrm{v}}
\newcommand{\1}{\mathbf{1}}
\providecommand{\abs}[1]{\left\lvert#1\right\rvert}
\begin{document}


\theoremstyle{plain}
\newtheorem{theorem}{Theorem}
\newtheorem{Corollary}[theorem]{Corollary}
\newtheorem{Lemma}[theorem]{Lemma}
\newtheorem{Proposition}[theorem]{Proposition}

\theoremstyle{definition}
\newtheorem{definition}[theorem]{Definition}
\newtheorem{Example}[theorem]{Example}
\newtheorem{conjecture}[theorem]{Conjecture}

\theoremstyle{remark}
\newtheorem{remark}[theorem]{Remark}

\begin{center}
\vskip 1cm{\LARGE\bf Chains with Small Intervals \\ 
\vskip .13in
in the Lattice of Binary Paths}
\vskip 1cm
I. Tasoulas, K. Manes, A. Sapounakis, and P. Tsikouras\\
Department of Informatics\\
University of Piraeus\\
18534 Piraeus\\
Greece\\
\href{mailto:jtas@unipi.gr}{\tt jtas@unipi.gr} \\
\href{mailto:kmanes@unipi.gr}{\tt kmanes@unipi.gr}\\
\href{mailto:arissap@unipi.gr}{\tt arissap@unipi.gr} \\
\href{mailto:pgtsik@unipi.gr}{\tt pgtsik@unipi.gr} \\
\end{center}

\begin{abstract}
We call an interval $[x,y]$ in a poset {\em small} if $y$ is the join of some elements covering $x$. In this paper, we study the chains of paths from a given arbitrary (binary) path $P$ to the maximum path having only small intervals. More precisely, we obtain and use several formulas for the enumeration of chains having only small intervals and minimal length. For this, we introduce and study the notions of filling and degree of a path, giving in addition some related statistics. 
\end{abstract}

\section{Introduction}\label{section:intro}

Let $\mathcal{P}_n$ be the set of all (binary) paths $P$ of length $|P| = n$, i.e., lattice paths $P = p_1 p_2 \cdots p_n$, starting from the origin of a pair of axes, where each {\em step} $p_i$, $i \in [n]$, is either an {\em upstep} $u = (1,1)$ or a {\em downstep} $d = (1,-1)$, connecting two consecutive points of the path. We denote by $|P|_u$ (resp., $|P|_d$) the number of upsteps (resp., downsteps) of $P$. An {\em ascent} (resp., {\em descent}) of $P$ is a maximal sequence of $u$'s (resp., $d$'s) in $P$. A {\em peak} (resp., {\em valley}) of the path is the last point of an ascent (resp., descent). Clearly, every peak (resp., valley) is either the middle point of an occurrence of $ud$ (resp., $du$), or the endpoint of an occurrence of $u$ (resp., $d$) at the end of the path. The {\em height} of a point of the path $P$ is its $y$-coordinate. We denote by $\lv(P)$ (resp., $\hv(P)$) the height of the lowest (resp., highest) valley of $P$. A {\em low} valley of $P$ is a valley of $P$ with height $\lv(P)$. We set $\mathcal{P} = \bigcup_{n \ge 0} \mathcal{P}_n$, where $\mathcal{P}_0$ consists of only the empty path $\varepsilon$ (the path which has no steps). 

A {\em Dyck path} is a path that starts and ends at the same height and lies weakly above this height. In this paper, we will denote Dyck paths using lower case letters. The set of Dyck paths of length $2n$ is denoted by $\mathcal{D}_n$, and we set $\mathcal{D} = \bigcup_{n \ge 0} \mathcal{D}_n$, where $D_0 = \{\varepsilon\}$. It is well known that $|D_n| = C_n$, where
$C_n = \frac{1}{n+1} \binom{2n}{n}$ is the $n$-th Catalan number, (sequence \seqnum{A000108} in OEIS \cite{OEIS}). Every Dyck path of the form $u^n d^n$, where $n \ge 0$, is called {\em pyramid}. A {\em Dyck prefix} (resp., {\em Dyck suffix}) is a path which is a prefix (resp., suffix) of a Dyck path. Every non-initial point of a Dyck prefix having height zero is called {\em return}. A {\em prime} Dyck path is a Dyck path with only one return point. It is well known that every non-empty Dyck path $a$ is the product of prime Dyck paths, i.e., $a = u a_1 d u a_2 d \cdots u a_k d$, where $a_i \in \mathcal{D}$, $i \in [k]$. Every Dyck prefix (resp., Dyck suffix) $P$ can be uniquely decomposed in the form $P = a_0 u a_1 \cdots u a_k$ (resp., $P = a_0 d a_1 \cdots d a_k$), where $a_i \in \mathcal{D}$, $i \in [0,k]$, $k \ge 0$. 

A natural (partial) ordering on $\mathcal{P}_n$ is defined via the geometric representation of the paths $P, Q \in \mathcal{P}_n$, where $P \le Q$ whenever $P$ lies (weakly) below $Q$. Obviously, $Q$ covers $P$ whenever $Q$ is obtained from $P$ by turning exactly one valley of $P$ into a peak. 
This ordering is better understood by considering the following alternative encoding of binary paths: Every $P \in \mathcal{P}_n$ can be described uniquely by the sequence $(h_i(P))_{i \in [n]}$ of the heights of its points, so that $P \le Q$ iff $h_i(P) \le h_i(Q)$, $i \in [n]$. Then, the join and meet of $P$, $Q$ are given by: 
\[ h_i(P \vee Q) = \max\{ h_i(P), h_i(Q) \} \textrm{ and } h_i(P \wedge Q) = \min\{ h_i(P), h_i(Q) \}. \]
 From these relations, it follows immediately that the poset $(\mathcal{P}_n, \le)$, or simply $\mathcal{P}_n$, is a finite distributive lattice. Clearly, $\mathcal{P}_n$ is self-dual, with minimum and maximum elements the paths $\mathbf{0}_n = d^n = \underbrace{d d \cdots d}_{n \ \rm{times}}$ and $\1_n = u^n = \underbrace{u u \cdots u}_{n \ \rm{times}}$ respectively.

We note that the length of every maximal chain of the interval $[P,Q]$, where $P = p_1 p_2 \cdots p_n$ and $Q = q_1 q_2 \cdots q_n$, is equal to 
\[ l(P,Q) = \frac{1}{2} \sum\limits_{i=1}^n (h_i(Q) - h_i(P)) = \sum\limits_{i=1}^n (n-i+1) \cdot ([q_i = u] - [p_i = u]), \]
where $[S]$ is the Iverson binary notation, i.e., for every proposition $S$, $[S] = 1$ if $S$ is true, and $0$ if $S$ is false. Hence, the lattice $\mathcal{P}_n$ is graded with rank equal to $\binom{n+1}{2}$, and its rank function is
\[ \rho(P) = \frac{1}{2} \left( \sum\limits_{i=1}^n h_i(P) + \binom{n+1}{2} \right) = \sum\limits_{i=1}^n (n-i+1) [p_i = u]. 
\]

This lattice appears in the literature in various equivalent forms (e.g., binary words \cite[p.\ 92]{KnuthV4A}, subsets of $[n]$ \cite{Lindstrom1970}, permutations of $[n]$ \cite[p.\ 402]{Stanley2011}, partitions of $n$ into distinct parts \cite{Stanley1991}, threshold graphs \cite{MerrisRoby2005}). 
The sublattice $\mathcal{D}_n$ of Dyck paths has been studied by several authors (e.g., \cite{GP1996, STT2006}). Manes et al.\ \cite{MSTT2018} have recently presented a bijection between comparable pairs of paths of this lattice and Dyck prefixes of odd length.

Every path $P$ can be decomposed as
\begin{equation}\label{k_i}
P = u^{k_1}d u^{k_2-k_1} d u^{k_3-k_2} d \cdots u^{k_m-k_{m-1}} d u^{k_{m+1}-k_m},
\end{equation}
where $m=|P|_d$ and $(k_i)_{i \in [m+1]}$ is a non-decreasing sequence of integers.
For $i \in [m]$, the term $k_i$ is the number of $u$'s before the $i$-th downstep of $P$ and $k_{m+1} = |P|_u$. In the sequel, we will write $P=(k_i)_{i \in [m+1]}$ to denote the encoding of $P$ by this sequence. This sequence is an extension of the notion of $P$-sequences defined by Pallo and Racca \cite{PR1985} for binary trees, and used by Germain and Pallo \cite{GP1996} in an equivalent form for Dyck paths, in order to prove that $\mathcal{D}_n$ is a distributive graded lattice. 
It is well known (e.g., see \cite[Theorem 10.7.1]{Krattenthaler}) that, using this encoding, the cardinality of every interval $[P,Q]$ in $\mathcal{P}_n$ can be evaluated for every pair of paths $P,Q$ when the two paths end at the same point (i.e., $|P|_d = |Q|_d$). Indeed, we have
\begin{equation}\label{eq:interval1} 
|[P,Q]| = \det\limits_{i,j \in [m]} \left( \binom{\mu_i - k_j + 1}{j -i +1} \right),
\end{equation}
where $P = (k_i)$, $Q = (\mu_i)$, $i \in [m+1]$.

In order to evaluate the cardinality $|[P,Q]|$ for two paths $P, Q \in \mathcal{P}_n$ that do not end at the same point (i.e., $|Q|_d < |P|_d$), we partition the interval $[P,Q]$ into intervals $[P_i, Q_i]$, $i \in [0, |P|_d - |Q|_d]$, of paths ending at the same point, where $P_i$ (resp., $Q_i$) is the path obtained by turning the last $|P|_d - |Q|_d - i$ $d$'s of $P$ into $u$'s (resp., the last $i$ $u$'s of $Q$ into $d$'s). Thus, we obtain
\begin{equation}\label{eq:interval2} |[P,Q]| = \sum\limits_{i=0}^{|P|_d - |Q|_d} |[P_i, Q_i]|.
\end{equation}

In this work, we will mainly deal with the intervals $[a, u^{|a|/2} d^{|a|/2}]$ and $[P, u^{|P|}]$ for $a \in \mathcal{D}$ and $P \in \mathcal{P}$, using the notation $I(a) = |[a, u^{|a|/2} d^{|a|/2}]|$ and $J(P) = |[P, u^{|P|}]|$.

In this paper we study chains from a certain path $P$ to the maximum path such that each member of the chain is the join of some covers of the previous element, i.e., chains with small intervals only. More formally, we say that a chain (or more generally a multichain) $C: P_0 \le P_1 \le \cdots \le P_k$ in $\mathcal{P}$ has (only) small intervals if $P_i$ is obtained by turning some valleys of $P_{i-1}$ into peaks, for every $i \in [k]$. 

In section~\ref{section:filling}, we introduce and study the notions of filling and degree of a path $P \in \mathcal{P}_n$, which will be used for the evaluation of the number $f(P)$ of minimal $P - \1_n$ chains with small intervals. Apart from this, the filling and the degree are of independent interest and they are related to some interesting statistics. Although Sapounakis et al.\ \cite{STT2006} have already defined them for the sublattice $\mathcal{D}_n$ of Dyck paths, these new notions are not simple extensions of the old ones. Furthermore, we give a connection between minimal chains with small intervals and the powers of the M\"obius function.

In section~\ref{section:mapf}, which is the main part of this paper, we evaluate the number $f(P)$ for an arbitrary path $P \in \mathcal{P}_n$. We do this by producing several formulas concerning special classes of paths, the combination of which completes the general case. Finally, we show that for specific classes of paths the map $f$ is related to the zeta function.

\section{Filling and degree of a path}\label{section:filling}

For every path $P \in \mathcal{P}_n \setminus \{\1_n\}$, we call the join of all elements covering $P$ {\em filling} of $P$, and we denote it by $\widetilde{P}$. We also define $\widetilde{\1_n} = \1_n$, for $n \ge 0$. Obviously, the filling of $P$ is obtained by turning every valley of $P$ into a peak. For example, if $P = dduudududdd$, then $\widetilde{P} = duduududddu$.

We note that for every $P, Q \in \mathcal{P}_n$ with $P \le Q < \1_n$ we have that $P < \widetilde{P} \le \widetilde{Q}$. 

It is easy to check that the interval $[P, \widetilde{P}]$ is isomorphic to the Boolean lattice $B_k$, where $k$ is the number of valleys of $P$, for each $P \in \mathcal{P}_n$.

In the following result we characterize the set of fillings of $\mathcal{P}_n$. 
For the proof, we can easily show by induction, using the first valley decomposition, that a path $P$ is a filling iff every valley of $P$ is adjacent to a peak.

\begin{Proposition}
A path $P$ is a filling of some path in $\mathcal{P}$ iff 
$P \ne d$ and it satisfies the following conditions:
\begin{enumerate}[label=\roman{*}),itemsep=0em]
\item $P$ avoids $d^2 u^2$,
\item $P$ does not start with $d u^2$,
\item $P$ does not end with $d^2$.
\end{enumerate} 
\end{Proposition}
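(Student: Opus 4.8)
The plan is to prove both directions of the equivalence by translating the hint --- ``$P$ is a filling iff every valley of $P$ is adjacent to a peak'' --- into the combinatorial forbidden-pattern conditions (i), (ii), (iii). So the first step is to establish the hint itself, and then the main work is a careful case analysis showing that ``every valley is adjacent to a peak'' is equivalent to the three stated conditions together with $P \neq d$.

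\medskip

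\emph{Step 1: Proving the hint.} I would argue by induction on $|P|$ using the first-valley decomposition. Recall $\widetilde{P}$ is obtained from $P$ by turning every valley into a peak; conversely, a path $R$ is a filling of some path iff there is a path $P \le R$ whose valleys, when raised, produce exactly $R$. Writing $P = u^{k}\, d\, \cdots$ and splitting off the portion up to the first valley, one checks how raising valleys interacts with the decomposition, and reduces to a shorter path. The base cases are the short paths $\varepsilon, u, d, uu, ud, du, dd, \dots$, which are handled by inspection ($d$ fails because the only path below it in $\mathcal{P}_1$ is $d$ itself, and $d$ has no valley to raise, while $\widetilde{d}$ is \emph{defined} only when $d = \1_1$, which it is not). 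The conclusion of this step is: $P$ is a filling of some path in $\mathcal{P}$ if and only if $P \neq d$ and every valley of $P$ is adjacent to a peak (i.e.\ immediately preceded or immediately followed along the path by a peak).

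\medskip

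\emph{Step 2: From ``valleys adjacent to peaks'' to the forbidden patterns.} Here is the crux, and it is essentially a finite local analysis of the two steps surrounding each valley. A valley is the last point of a descent; locally it looks like $\cdots d\, \underline{v}\, \cdots$ where after $v$ we either have a $u$ (so $v$ is the middle of a $du$) or $v$ is the endpoint of the path. The point $v$ is adjacent to a peak iff the step just before the incoming $d$ is a $u$ (making the point \emph{before} $v$'s descent start a peak --- but that is not quite right, I need the peak to be the neighbour of $v$), so more carefully: the neighbours of $v$ on the path are the point reached one step before $v$ and the point reached one step after $v$. The former is a peak iff the step two-before-$v$... — I will do this bookkeeping precisely, but the upshot of the pattern-matching is: (a) if $v$ is interior and preceded by $dd$, then its left neighbour is not a peak, and if it is also followed by $uu$ its right neighbour is not a peak either, so a valley fails the condition exactly when the path contains $d^2 u^2$; (b) a valley at the very start of the path (after an initial descent $d^{j}$ with $j \ge 1$) can only have a peak neighbour on its right, forcing the path to continue $du$ then \emph{not} $u$ again, i.e.\ $d^{j}u$ must be followed by $d$, and if $j \ge 2$ even that fails --- this gives ``does not start with $d u^2$'' together with the observation that starting with $d$ at all is fine only if the resulting first valley (at height $<0$) has a peak on its right; (c) symmetrically, a valley at the very end of the path has only a left neighbour available, and the requirement that this left neighbour be a peak translates to ``does not end with $d^2$.'' Each of these is a short, self-contained check on which $2$- or $3$-step windows can occur.

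\medskip

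\emph{Step 3: Converse and assembly.} For the converse I would show that if $P \neq d$ and (i)--(iii) hold, then every valley is adjacent to a peak, again by examining each valley according to whether it is the leftmost point, the rightmost point, or interior, and checking that the negation of adjacency-to-a-peak in each case produces one of the forbidden configurations $d^2u^2$, initial $du^2$, or terminal $d^2$ (or forces $P = d$). Then Step 1 closes the argument in both directions. The main obstacle I anticipate is purely notational: pinning down exactly which local window of steps witnesses ``the left (resp.\ right) neighbour of a valley is a peak,'' including the boundary valleys at the start and end of the path where one neighbour is missing --- once the window dictionary is set up, conditions (i)--(iii) fall out mechanically, and (ii)--(iii) are precisely the boundary-valley contributions while (i) is the interior-valley contribution. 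I would present this as a single lemma-style paragraph with a small table or enumeration of the possible windows, rather than a long prose argument.
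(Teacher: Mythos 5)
Your route is the same as the paper's: the paper's entire proof consists of the remark that, by induction on the first-valley decomposition, $P$ is a filling iff every valley of $P$ is adjacent to a peak, with the translation into conditions (i)--(iii) left to the reader; your Steps 2--3 simply carry out that translation, and your window dictionary is in substance the right one (interior valleys give (i), the initial valley gives (ii), the terminal valley gives (iii), and $P=d$ is the lone leftover).

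Two concrete slips in your write-up need repair, however. In case (b), the clause ``and if $j \ge 2$ even that fails'' is false as stated: a first valley sitting after an initial descent $d^j$ with $j \ge 2$ can perfectly well have a peak as its right neighbour, e.g.\ $ddu = \widetilde{ddd}$ and $ddud = \widetilde{dddu}$ are fillings (and the tribonacci count in the next proposition explicitly includes fillings beginning with $dd$). The correct observation is that for $j \ge 2$ the bad continuation $d^j u u$ already contains the factor $d^2u^2$, so it is covered by (i); that is exactly why (ii) only needs to forbid an initial $du^2$. Relatedly, ``$d^j u$ must be followed by $d$'' should read ``followed by $d$ or by nothing'', since when the $u$ ends the path its endpoint is a peak ($du$ and $ddu$ are fillings). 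Second, your base case for $P=d$ misstates the definitions: $d$ does have a valley (its endpoint is the last point of a descent), and $\widetilde{d}=u$ is defined; the reason $d$ is not a filling is simply that $\widetilde{u}=\widetilde{d}=u \ne d$, or, in the language of your criterion, that the unique valley of $d$ has only the initial point as neighbour, which is not a peak. With these corrections your argument goes through and coincides with the paper's intended proof.
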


In the next result, we use the above characterization in order to enumerate the fillings of $\mathcal{P}_n$.

\begin{Proposition}
The number $a_n$ of fillings of $\mathcal{P}_n$ is the $(n+1)$-th tribonacci number (seq.\ \seqnum{A000213} in OEIS \cite{OEIS}), given by  
\begin{equation}\label{tribeq} a_n = a_{n-1} + a_{n-2} + a_{n-3}, \  n \ge 3,
\end{equation}
and $a_0 = a_1 = 1$, $a_2 = 3$. 
\end{Proposition}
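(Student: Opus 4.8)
The plan is to set up a direct bijective/recursive decomposition of the fillings of $\mathcal{P}_n$ according to how they begin, matching the tribonacci recurrence term by term. By the previous proposition, a path $P \in \mathcal{P}_n$ (with $n \ge 1$) is a filling iff $P \neq d$, $P$ avoids $d^2u^2$, $P$ does not start with $du^2$, and $P$ does not end with $d^2$. Let $a_n$ denote the number of such paths in $\mathcal{P}_n$. First I would verify the base cases by brute force: $\mathcal{P}_1 = \{u,d\}$ and only $d$ fails, so $a_1 = 1$; $\mathcal{P}_2 = \{uu, ud, du, dd\}$ and only $dd$ fails (ends with $d^2$), so $a_2 = 3$; it is convenient to set $a_0 = 1$ for the empty path, which then makes the recurrence hold at $n=3$.

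The main step is the recurrence for $n \ge 3$. I would split the set of fillings of length $n$ according to the first step. If $P$ starts with $u$, write $P = uP'$ where $P' \in \mathcal{P}_{n-1}$; one checks that the conditions on $P$ translate exactly to the conditions making $P'$ a filling of $\mathcal{P}_{n-1}$ — the forbidden patterns $d^2u^2$ and the terminal $d^2$ are unaffected by a leading $u$, the prohibition on starting with $du^2$ is vacuous for $P$, and $P' = d$ (the only excluded one-step path) would force $P = ud$, which is itself a perfectly valid filling for $n=2$, so for $n-1 \ge 2$ the path $P'$ ranges over all fillings of $\mathcal{P}_{n-1}$ with no extra exclusion; this gives $a_{n-1}$ such paths. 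If $P$ starts with $d$, then since $P$ cannot start with $du^2$ and cannot begin $ddu u$ (that is a $d^2u^2$ occurrence), the step after the initial $d$, if it is a $u$, must be followed by a $d$; and if it is a $d$, we get a leading $dd$ which then cannot be followed by $uu$. I would organize this as: $P$ starts with $du$ or with $dd$. In the first case, the avoidance of $d^2u^2$ and of $du^2$ forces $P = dud\cdots$ actually $P = du\, Q$ where the step after $du$ is $d$, i.e.\ $P = dudQ'$; here I need to check that $Q' \in \mathcal{P}_{n-3}$ being a filling (together with the empty-path convention and the terminal-$d^2$ rule, which now lands on $Q'$ unless $Q'=\varepsilon$, in which case $P=dud$ ends in $d$, fine) is exactly the right condition, contributing $a_{n-3}$. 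In the second case $P = dd\,Q$ with $Q$ not starting with $uu$; peeling off one $d$, the path $dQ \in \mathcal{P}_{n-1}$ starts with $d$, does not start with $du^2$ (since $Q$ doesn't start with $uu$... but I must also rule out $dQ$ starting $du\cdot$ inadvertently creating issues) — here the cleanest route is to observe that $P=dd Q$ is a filling of length $n$ iff $dQ$ is a filling of length $n-1$ (the patterns and endpoint conditions all match up, and $dQ = d$ would mean $P = dd$, excluded, matching $dQ$ being the excluded path), giving $a_{n-2}$. Summing: $a_n = a_{n-1} + a_{n-2} + a_{n-3}$.

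The step I expect to be the genuine obstacle — and where I would spend the most care — is making the case analysis for paths starting with $d$ airtight, in particular the bookkeeping around short residual paths ($\varepsilon$, $d$) and the interaction of the ``does not start with $du^2$'' and ``avoids $d^2u^2$'' conditions with the ``does not end with $d^2$'' condition when the residual path is very short. One must confirm in each of the three branches that the excluded small cases on the residual path (namely the single path $d$, plus $\varepsilon$ when one insists $n\ge 1$) correspond precisely to excluded cases on $P$, so that the counts are $a_{n-1}$, $a_{n-2}$, $a_{n-3}$ with the convention $a_0 = 1$ and no off-by-one error. Once the recurrence and the three base values $a_0=a_1=1$, $a_2=3$ are established, comparison with the defining recurrence of \seqnum{A000213} (shifted by one index) finishes the proof; this last identification is immediate.
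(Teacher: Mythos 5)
Your case split by the \emph{first letter only} does not support the counts you assign to each class, and two of the three claimed bijections are false. In the ``starts with $u$'' case you note that the prohibition on starting with $du^2$ is vacuous for $P$ --- but that is exactly the problem: nothing forces $P'$ to avoid starting with $du^2$, so $P\mapsto P'$ does not land in the fillings of $\mathcal{P}_{n-1}$. Concretely, $P=uduu$ is a filling of $\mathcal{P}_4$ (it is $\widetilde{duuu}$, and it satisfies all three conditions), yet $P'=duu$ starts with $du^2$ and is not a filling; at $n=4$ there are $6$ fillings beginning with $u$, not $a_3=5$. Similarly, in the ``starts with $dd$'' case your equivalence ``$ddQ$ is a filling iff $dQ$ is a filling'' is correct, but it only identifies this class with the fillings of length $n-1$ that \emph{start with $d$}, which is not $a_{n-2}$: at $n=4$ the class is $\{ddud,\,dddu\}$, of size $2$, while $a_2=3$. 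The two errors happen to cancel in the sum ($6+2=5+3$), so the recurrence you reach is true, but the argument as written does not prove it. Only your ``starts with $du$'' case (forced shape $dud\,Q'$, contributing $a_{n-3}$) is sound, and it matches the paper.

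The fix is to regroup as the paper does, splitting on the first \emph{two} steps: fillings starting with $uu$ or $dd$ are in bijection (delete the first step) with \emph{all} fillings of length $n-1$ --- the $uu$-paths hit exactly the fillings starting with $u$ and the $dd$-paths exactly those starting with $d$, and the condition ``$P$ avoids $d^2u^2$'' is precisely what replaces, after deleting the leading $d$, the condition ``does not start with $du^2$'' on the shortened path; fillings starting with $ud$ are in bijection (delete two steps) with all fillings of length $n-2$; and fillings starting with $du$, i.e.\ with $dud$, are in bijection (delete three steps) with all fillings of length $n-3$. With that grouping your base-case checks and the identification with \seqnum{A000213} go through as you describe.
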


\begin{proof}
Let $\widetilde{\mathcal{P}}_n$ be the set of fillings in $\mathcal{P}_n$. 
For $n = 0,1,2$ we have that $\widetilde{\mathcal{P}}_0 = \{ \varepsilon \}, \ \widetilde{\mathcal{P}}_1 = \{ u \}, \ \widetilde{\mathcal{P}}_2 = \{ uu, ud, du \}$ and hence, $|\widetilde{\mathcal{P}}_0| = |\widetilde{\mathcal{P}}_1| = 1$ and $|\widetilde{\mathcal{P}}_2| = 3$. For $n \ge 3$, $\widetilde{\mathcal{P}}_n$ can be partitioned into the following three sets:
\begin{align*}
\widetilde{\mathcal{P}}_{n,1} & = \{ P \in \widetilde{\mathcal{P}}_n : P \textrm{ starts with $uu$ or $dd$} \}, \\
\widetilde{\mathcal{P}}_{n,2} & = \{ P \in \widetilde{\mathcal{P}}_n : P \textrm{ starts with $ud$} \}, \\
\widetilde{\mathcal{P}}_{n,3} & = \{ P \in \widetilde{\mathcal{P}}_n : P \textrm{ starts with $du$} \}.
\end{align*}
By deleting the first step of each path of $\widetilde{\mathcal{P}}_{n,1}$, we can easily check that $\abs{\widetilde{\mathcal{P}}_{n,1}} = \abs{\widetilde{\mathcal{P}}_{n-1}} = a_{n-1}$. 
Similarly, by deleting the first two (resp., three) steps of every path in $\widetilde{\mathcal{P}}_{n,2}$ (resp., $\widetilde{\mathcal{P}}_{n,3}$), we obtain that $\abs{\widetilde{\mathcal{P}}_{n,2}} = \abs{\widetilde{\mathcal{P}}_{n-2} } = a_{n-2}$ (resp., $\abs{ \widetilde{\mathcal{P}}_{n,3} } = \abs{ \widetilde{\mathcal{P}}_{n-3} } = a_{n-3}$), which gives relation~\eqref{tribeq}. 
\end{proof}

Using the notion of the filling, we restate that a chain $C: P_0 \le P_1 \le \cdots \le P_k$ has small intervals iff $P_i \le \widetilde{P_{i-1}}$, for every $i \in [k]$.

For every path $P \in \mathcal{P}_n$ we define inductively a finite sequence of paths $P^{(i)}$ in $\mathcal{P}_n$, as follows: $P^{(0)} = P$, and $P^{(i)} = \widetilde{P^{(i-1)}}$ whenever $P^{(i-1)} \ne \1_n$. The number $\delta(P)$ for which $P^{(\delta(P))} = \1_n$ is called the {\em degree} of $P$. Clearly, the chain $C: P_0 = P^{(0)} \le P^{(1)} \le \cdots \le P^{(\delta(P))} = \1_n$ is a $P - \1_n$ chain with small intervals and length $\delta(P)$. In the following result, we establish the minimality of $\delta(P)$ with respect to this property.

\begin{Proposition}\label{prop:minlength}
The length of every chain from a path $P \in \mathcal{P}_n$ to $\1_n$, with small intervals is greater than or equal to $\delta(P)$.
\end{Proposition}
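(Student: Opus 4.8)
The plan is to prove a slightly stronger statement, namely that the canonical chain $P^{(0)}\le P^{(1)}\le\cdots$ obtained by iterating the filling dominates, term by term, \emph{every} chain with small intervals issued from $P$. Precisely, I would show by induction on $i$ that if $C: P=P_0\le P_1\le\cdots\le P_k=\1_n$ is a chain with small intervals, then $P_i\le P^{(i)}$ for every $i\in[0,k]$. Granting this, the case $i=k$ yields $\1_n=P_k\le P^{(k)}$, hence $P^{(k)}=\1_n$; since $\delta(P)$ is by definition the first index at which the filling chain reaches $\1_n$, this forces $k\ge\delta(P)$, which is exactly the claimed lower bound.

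The induction rests on two facts already available above: that a chain has small intervals precisely when $P_i\le\widetilde{P_{i-1}}$ for each $i$, and the monotonicity remark that $P\le Q<\1_n$ implies $\widetilde{P}\le\widetilde{Q}$ (together with $\widetilde{\1_n}=\1_n$). The base case $i=0$ is immediate, as $P_0=P=P^{(0)}$. For the inductive step, assume $P_{i-1}\le P^{(i-1)}$. If $P^{(i-1)}=\1_n$, then $P^{(i)}=\1_n$ and $P_i\le\1_n=P^{(i)}$ trivially. If instead $P^{(i-1)}<\1_n$, then monotonicity applied to $P_{i-1}\le P^{(i-1)}$ gives $\widetilde{P_{i-1}}\le\widetilde{P^{(i-1)}}=P^{(i)}$, and combining this with $P_i\le\widetilde{P_{i-1}}$ (the small-interval hypothesis) gives $P_i\le P^{(i)}$.

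I do not expect a genuine obstacle: the whole proof is a short induction once the correct invariant --- term-by-term domination of an arbitrary small-interval chain by the filling chain --- has been isolated, and the engine is simply the monotonicity of the map $P\mapsto\widetilde{P}$. The one point that requires a little care is the interaction with the top element: the filling chain may reach $\1_n$ strictly before step $k$, so the filling could be applied to $\1_n$ itself; this is precisely why the monotonicity fact carries the hypothesis $Q<\1_n$ and why the inductive step is split into the two cases above. Alternatively one could first truncate $C$ at its first occurrence of $\1_n$, but the inline case split is equally short.
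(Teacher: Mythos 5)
Your proof is correct, and it takes a somewhat different route from the paper's. You establish by induction along the arbitrary chain the pointwise domination $P_i \le P^{(i)}$ (extending the filling sequence past $\delta(P)$ via $\widetilde{\1_n}=\1_n$), and then read off $k \ge \delta(P)$ from $P^{(k)}=\1_n$ and the fact that $\delta(P)$ is the first index at which the filling sequence reaches the top. The paper instead fixes the levels of the filling chain and, for each $j \in [\delta(P)]$, chooses the greatest index $i_j$ with $P_{i_j-1} \le P^{(j-1)}$, showing $P^{(j-1)} < P_{i_j} \le \widetilde{P_{i_j-1}} \le P^{(j)}$; thus each interval $[P^{(j-1)},P^{(j)}]$ is crossed by a distinct element of $C$, which also forces $k \ge \delta(P)$. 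Both arguments run on the same two facts you cite --- the restatement of the small-interval condition as $P_i \le \widetilde{P_{i-1}}$ and the monotonicity $P \le Q < \1_n \Rightarrow \widetilde{P} \le \widetilde{Q}$ --- but the invariants differ: your term-by-term domination gives a clean one-line induction and sidesteps the maximal-index bookkeeping (and the verification that the selected indices are distinct), while the paper's version yields the slightly more localized information that every rung $[P^{(j-1)},P^{(j)}]$ of the canonical chain meets $C$. Your handling of the top element (the case split on $P^{(i-1)}=\1_n$, justified by $\widetilde{\1_n}=\1_n$) is exactly the care the argument needs, and the proof also applies verbatim to multichains, which is what the later M\"obius-function computation uses.
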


\begin{proof}
Let $C: P = P_0 \le P_1 \le \cdots \le P_k = \1_n$ be a chain with $P_i \le \widetilde{P_{i-1}}$, for every $i \in [k]$. For every $j \in [\delta(P)]$, we denote by $i_j$ the greatest element of $[k]$ such that $P_{i_j-1} \le P^{(j-1)}$. It follows that $P^{(j-1)} < P_{i_j} \le \widetilde{P_{i_j-1}} \le P^{(j)}$, so that every interval $[P^{(j-1)}, P^{(j)}]$ contains an element of the chain $C$, giving automatically that $\delta(P) \le k$. 
\end{proof}

We now come to the evaluation of $\delta(P)$ for every path $P \in \mathcal{P}_n$. Clearly, we have $\delta(\1_n) = 0$, $\delta(\mathbf{0}_n) = n-1$ and $\delta(\widetilde{P}) = \delta(P) - 1$ for $P \ne \1_n$. In the general case, we will see that $\delta(P)$ is closely related to $\lv(P)$ (the height of the lowest valley of $P$). Indeed, since $\widetilde{P}$ is obtained by turning all valleys of $P$ into peaks, it follows that the heights of their low valley points will differ by exactly one, i.e., 
$\lv(\widetilde{P}) = \lv(P) + 1$, for $P \ne u^{|P|}, u^{|P|-1} d$. It follows that
\[ \lv(P^{(i)}) = \lv(P^{(i-1)}) + 1, \  1 \le i \le \delta(P) - 1. \]
Summing for all $i$, we obtain that
\[ \lv(P^{(\delta(P) -1)}) = \lv(P^{(0)}) + \delta(P) - 1, \]
so that
\[ |P| - 2 = \lv(P) + \delta(P) - 1, \]
giving the following result.

\begin{Lemma}\label{lemma:degreeformula} For every path $P \in \mathcal{P}$ with $P \ne u^{|P|}$ the degree $\delta(P)$ is given by the formula
\[ \delta(P) = |P| - 1 - \lv(P). \]
\end{Lemma}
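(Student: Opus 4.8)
The plan is to leverage the telescoping already carried out in the discussion preceding the statement: it is shown there that $\lv(P^{(\delta(P)-1)}) = \lv(P) + \delta(P) - 1$, so the lemma will follow once we (a) identify the path $Q := P^{(\delta(P)-1)}$ sitting just below $\1_{|P|}$ in the degree chain, and (b) fully justify the identity $\lv(\widetilde{R}) = \lv(R)+1$ that is invoked for every $R \notin \{u^{|R|}, u^{|R|-1}d\}$ (namely for $R = P^{(j)}$, $0 \le j \le \delta(P)-2$). For (a): since $P \ne u^{|P|}$ we have $\delta(P) \ge 1$, so $Q$ is a path with $\widetilde{Q} = u^{|P|}$ and $Q \ne u^{|P|}$. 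Forming a filling turns each valley into a peak, which either swaps an occurrence of $du$ into $ud$ (keeping $|Q|_d$ fixed) or replaces a terminal $d$ by $u$ (decreasing $|Q|_d$ by one); hence $\widetilde{Q} = u^{|P|}$ can have no downsteps only if $Q$ has a single downstep, occurring at its very end, i.e.\ $Q = u^{|P|-1}d$. Then $\lv(Q) = |P|-2$, and substituting into $\lv(Q) = \lv(P) + \delta(P) - 1$ yields $\delta(P) = |P| - 1 - \lv(P)$; the boundary case $\delta(P) = 1$, where the telescoped range is empty, is covered directly since then $P = Q = u^{|P|-1}d$.

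For (b), the crucial elementary fact is the exact effect of turning a valley into a peak: it raises the height of that one valley point by $2$ and leaves every other height unchanged. Since distinct valleys are separated by a peak, these local moves do not interfere, and $\widetilde{R}$ is obtained from $R$ by simultaneously raising each valley point of $R$ by $2$. Consequently every former valley point of $R$ has height at least $\lv(R)+2$ in $\widetilde{R}$ and, now having an up-step entering and a down-step leaving it (or a terminal up-step, if it was the last point), is a peak of $\widetilde{R}$; so former valleys do not contribute to $\lv(\widetilde{R})$. On the other hand, a point that is a valley of $\widetilde{R}$ but was not a valley of $R$ must have one of its two incident steps altered by such a move; a short inspection of which valley of $R$ produced the alteration shows in each case that the point lies exactly one unit above the responsible valley, hence at height $\ge \lv(R)+1$. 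This gives $\lv(\widetilde{R}) \ge \lv(R)+1$.

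For the matching lower bound one must exhibit a valley of $\widetilde{R}$ at height exactly $\lv(R)+1$: choosing a lowest valley $v$ of $R$ and turning it into a peak shortens the descent reaching $v$ (or the ascent leaving it) and leaves an adjacent point at height $\lv(R)+1$ which, after the move, has a down-step in and an up-step out (or is a terminal $d$), so it is a valley of $\widetilde{R}$. I expect the one genuinely delicate point — the main obstacle — to be the configuration in which the only lowest valley of $R$ is its last point, reached by a descent of length $1$: here the construction above stalls, and one must instead walk leftward from that last point, using minimality of $\lv(R)$ at each step, to conclude that all earlier steps are up-steps, i.e.\ $R = u^{|R|-1}d$, which is precisely the case excluded from the identity. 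In every other case $v$ can be taken of $du$-type or reached by a descent of length $\ge 2$, and the construction goes through. This establishes $\lv(\widetilde{R}) = \lv(R)+1$ and completes the proof.
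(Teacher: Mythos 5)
Your proof is correct and follows essentially the same route as the paper, which likewise establishes the lemma by telescoping $\lv(P^{(i)}) = \lv(P^{(i-1)})+1$ along the chain $P^{(0)} \le P^{(1)} \le \cdots \le \1_{|P|}$ and using that the penultimate path must be $u^{|P|-1}d$. The only difference is that you carefully verify the two facts the paper takes as evident --- that $\widetilde{Q} = u^{|Q|}$ forces $Q = u^{|Q|-1}d$, and that $\lv(\widetilde{R}) = \lv(R)+1$ whenever $R \ne u^{|R|}, u^{|R|-1}d$ --- and your treatment of both, including the delicate terminal-valley case, is sound.
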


\begin{Proposition}
The number of paths of $\mathcal{P}_n$ having degree $k$ equals 
\[ \binom{\min\{n,k\}}{\lfloor\frac{k+2}{2}\rfloor}\] for $1 \le k \le 2n-1$, $n \in \mathbb{N}^*$.
\end{Proposition}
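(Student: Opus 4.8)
The plan is to translate the statement, via Lemma~\ref{lemma:degreeformula}, into a count of the paths of $\mathcal{P}_n$ by the height of their lowest valley, and then to carry out that count by separating two regimes according to the sign of $\lv$. Set $v=n-1-k$, so that by Lemma~\ref{lemma:degreeformula} a path $P\in\mathcal{P}_n$ with $P\neq\1_n$ satisfies $\delta(P)=k$ iff $\lv(P)=v$; as $k$ ranges over $[1,2n-1]$, $v$ ranges over $[-n,n-2]$. Let $g_n(v)$ be the number we want, and let $G_n(v)$ be the number of $P\in\mathcal{P}_n$ all of whose valleys have height $\geq v$ (counting $\1_n$, which has no valley, in every $G_n(v)$); then $g_n(v)=G_n(v)-G_n(v+1)$. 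The elementary fact I would isolate first is that the lowest point of a nonempty path is either its origin or a valley: if the minimum height is attained at a point $i\geq 1$, then necessarily $p_i=d$, and $p_{i+1}=u$ unless $i$ is the endpoint. Two consequences follow: for $v\leq 0$, ``all valleys have height $\geq v$'' is the same as ``$P$ stays weakly above the line $y=v$''; and for $v\leq -1$, ``$\lv(P)=v$'' is the same as ``the minimum height of $P$ equals $v$''. For $v\geq 0$, however, $\lv(P)=v$ is a condition on the \emph{valleys}, not on the floor of $P$ (e.g.\ $\1_n$ and $u^{n-1}d$ have floor $0$ but $\lv\neq 0$), and is handled separately; this split is the crux.

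\emph{Regime $v\geq 0$} (equivalently $k\leq n-1$, where $\min\{n,k\}=k$). Here a path all of whose valleys have height $\geq v$ must begin with $u^{v}$: its first ascent ends at a peak whose height equals the ascent's length, and the following descent reaches the first valley at height $\geq v$, so the first ascent has length $\geq v+1$. Deleting this $u^{v}$ then yields a bijection (shifting heights by $v$) between $\{P\in\mathcal{P}_n:\text{all valleys}\geq v\}$ and the set of Dyck prefixes of length $n-v$, whence $G_n(v)=D_{n-v}$, where $D_m$ is the number of Dyck prefixes of length $m$, i.e.\ the classical value $\binom{m}{\lfloor m/2\rfloor}$. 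Hence $g_n(v)=D_{n-v}-D_{n-v-1}$, and a short computation with Pascal's rule (separating $m$ even and odd) gives $D_m-D_{m-1}=\binom{m-1}{\lceil m/2\rceil}$; with $m=n-v=k+1$ and $\lceil(k+1)/2\rceil=\lfloor(k+2)/2\rfloor$ this equals $\binom{k}{\lfloor(k+2)/2\rfloor}$.

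\emph{Regime $v\leq -1$} (equivalently $k\geq n$, where $\min\{n,k\}=n$). By the consequence above, $g_n(v)$ counts the length-$n$ paths whose minimum height is exactly $v$. Applying the reflection principle across the line $y=v-1$, the number of length-$n$ paths from the origin to height $j$ that stay $\geq v$ equals $\binom{n}{(n+j)/2}-\binom{n}{(n+j)/2-v+1}$ for $j\geq v$; subtracting the same expression with $v$ replaced by $v+1$ gives the number ending at $j$ with minimum exactly $v$, namely $\binom{n}{(n+j)/2-v}-\binom{n}{(n+j)/2-v+1}$. Summing this telescoping quantity over the admissible values $j\in\{v,v+1,\dots,n\}$ collapses to the single term $\binom{n}{\lceil(n-v)/2\rceil}$ (the other end of the telescope is a binomial $\binom{n}{m}$ with $m>n$, hence $0$). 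With $v=n-1-k$ and $\lceil(n-v)/2\rceil=\lceil(k+1)/2\rceil=\lfloor(k+2)/2\rfloor$, this is $\binom{n}{\lfloor(k+2)/2\rfloor}$, so together with the previous regime the count is $\binom{\min\{n,k\}}{\lfloor(k+2)/2\rfloor}$.

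The main obstacle is recognizing and correctly using this dichotomy: ``$\lv(P)=0$'' is strictly stronger than ``$P$ has floor $0$'', so the $v\geq 0$ case cannot be absorbed into a reflection count and instead needs the reduction to Dyck prefixes via stripping $u^{v}$, together with the classical enumeration $D_m=\binom{m}{\lfloor m/2\rfloor}$. The remaining pieces -- the reflection principle, the identity $\binom{m}{\lfloor m/2\rfloor}-\binom{m-1}{\lfloor(m-1)/2\rfloor}=\binom{m-1}{\lceil m/2\rceil}$, and the floor/ceiling bookkeeping that matches $\lceil(k+1)/2\rceil$ with $\lfloor(k+2)/2\rfloor$ -- are routine.
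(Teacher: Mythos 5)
Your proposal is correct, but it takes a genuinely different route from the paper. The paper proceeds by induction on $n$: it splits the paths of degree $k$ according to their first step, deletes that step to get bijections with the degree-$k$ paths of length $n-1$ (first step $u$), with Dyck prefixes of length $n-1$ (first step $d$, $k=n$), or with the degree-$(k-2)$ paths of length $n-1$ (first step $d$, $k>n$), and then verifies the binomial formula by Pascal's rule in a three-way case split. You instead translate the statement through Lemma~\ref{lemma:degreeformula} into a direct count of paths with lowest valley at height $v=n-1-k$ and evaluate it in closed form: for $v\ge 0$ by observing that such paths are forced to start with $u^{v}$ and stripping that prefix to land bijectively on Dyck prefixes of length $k+1$, so the answer is a difference of consecutive central-type binomials; for $v\le -1$ by identifying $\lv(P)=v$ with the minimum-height condition and applying the reflection principle with a telescoping sum over the terminal height. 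Both arguments rest on the degree formula and on the classical enumeration $\binom{m}{\lfloor m/2\rfloor}$ of Dyck prefixes (which the paper also invokes at its $k=n$ case), but yours is non-inductive and makes structurally visible why $\min\{n,k\}$ appears, at the cost of importing the reflection principle; the paper's recursion is more elementary step by step but hides the closed form behind the induction. Two spots in your write-up are terse though easily repaired: the claim that all valleys $\ge v\ge 0$ forces the prefix $u^{v}$ tacitly assumes the path starts with $u$ and has a descent (the excluded cases being immediate), and the forward direction of your bijection needs the remark that the suffix after $u^{v}$ stays weakly above height $v$, which follows from your own observation that a minimum is attained at the origin or at a valley, applied to that suffix; likewise the telescoping in the negative regime should note the parity of the admissible endpoints, which is exactly what the ceiling $\lceil (n-v)/2\rceil$ absorbs.
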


\begin{proof}
Let $\Delta_{n,k}$ be the set of all $P \in \mathcal{P}_n$ with $\delta(P) = k$, and let $M_{n,k}$, $N_{n,k}$ be its subsets of paths which start with $u$ and $d$ respectively. 

We first prove that for $n \ge 2$ we have that
\begin{equation}\label{Mkn_formula} 
\abs{M_{n,k}} = \abs{\Delta_{n-1,k}},
\end{equation} 
and
\begin{equation}\label{Nkn_formula}
\abs{N_{n,k}} = \begin{cases} 
0, & \textrm{ if $k < n$}; \\ 
\abs{\mathcal{DP}_{n-1}}, & \textrm{ if $k = n$}; \\
\abs{\Delta_{n-1,k-2}}, & \textrm{ if $k > n$},
\end{cases}
\end{equation}
where $\mathcal{DP}_{n}$ is the set of Dyck prefixes of length $n$, which is well known that it is enumerated by the binomial $\binom{n}{\lfloor n/2 \rfloor}$, (seq.\ \seqnum{A001405} in OEIS \cite{OEIS}). Indeed, by deleting the first $u$ from each path of $M_{n,k}$ we obtain a path in $\Delta_{n-1,k}$, giving a bijection between the sets $M_{n,k}$ and $\Delta_{n-1,k}$ which justifies relation \eqref{Mkn_formula}.

On the other hand, for the proof of relation \eqref{Nkn_formula}, we consider the following cases:

\begin{enumerate}[label=(\roman{*})]

\item if $k < n$, then $\lv(P) \ge 0$, so that every $P \in \Delta_{n,k}$ starts with $u$, giving $N_{n,k} = \emptyset$.

\item $k \ge n$; by deleting the first $d$ from each path $P \in N_{n,k}$ we obtain a path $Q$ such that if $k = n$, then $\lv(P) = -1$, or equivalently $Q \in \mathcal{DP}_{n-1}$, whereas if $k > n$, then $\delta(Q) = k-2$, i.e., $Q \in \Delta_{n-1,k-2}$; this gives a bijection between the sets $N_{n,k}$ and $\mathcal{DP}_{n-1}$ if $k = n$, and between the sets $N_{n,k}$ and $\Delta_{n-1,k-2}$ if $k > n$, justifying relation \eqref{Nkn_formula}.

\end{enumerate}

Clearly, the result holds for $n = 1$, whereas for $n \ge 2$, using  \eqref{Mkn_formula} and \eqref{Nkn_formula}, we proceed by induction on $n$.

If $k < n$, then $\abs{\Delta_{n,k}} = \abs{\Delta_{n-1,k}} = \binom{k}{\lfloor \frac{k+2}{2} \rfloor}$. 

If $k = n$, then 
$\abs{\Delta_{n,n}} 
 = \abs{\Delta_{n-1,n}} + \abs{\mathcal{DP}_{n-1}} 
 = \binom{\min\{n-1,n\}}{\lfloor \frac{n+2}{2} \rfloor} + \binom{n-1}{\lfloor \frac{n-1}{2} \rfloor} 
 = \binom{n-1}{\lfloor \frac{n+2}{2} \rfloor} + \binom{n-1}{\lfloor \frac{n-1}{2} \rfloor}  = \binom{n}{\lfloor \frac{n+2}{2} \rfloor}$.

If $k > n$, then
$\abs{\Delta_{n,k}}
 = \abs{\Delta_{n-1,k}} + \abs{\Delta_{n-1,k-2}} = \binom{n-1}{\lfloor \frac{k+2}{2} \rfloor} + \binom{n-1}{\lfloor \frac{k}{2} \rfloor} = \binom{n}{\lfloor \frac{k+2}{2} \rfloor}$. \qedhere
\end{proof}

The minimal $P - \1_n$ chains with small intervals which, according to Proposition \ref{prop:minlength}, must have length equal to $\delta(P)$, are closely related to the powers of the M\"obius function $\mu$ of $\mathcal{P}$.
Indeed, if $f$ is the map on $\mathcal{P}$ defined by
\[ f(P) = \# \ P - \1_n \textrm{ chains of length $\delta(P)$ with small intervals}, \]
then we have the following result.

\begin{Proposition}
For every path $P \in \mathcal{P}$ we have that
\[ \mu^k(P, \1_{|P|}) = 
\begin{cases}
0, & \textrm{ if $k < \delta(P)$}; \\
(-1)^{l(P, \1_{|P|})} f(P), & \textrm{ if $k = \delta(P)$.}
\end{cases} \]
\end{Proposition}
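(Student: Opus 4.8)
The plan is to set up a more refined recursion for the higher powers of the Möbius function that tracks chains with small intervals, and then read off both cases of the statement from it. Recall that $\mu^k(x,z) = \sum \mu(x,y_1)\mu(y_1,y_2)\cdots\mu(y_{k-1},z)$, the sum over all $(k-1)$-tuples $x \le y_1 \le \cdots \le y_{k-1} \le z$. Since $[P,\widetilde P]$ is isomorphic to the Boolean lattice $B_r$ (with $r$ the number of valleys of $P$), we have $\mu(P,Q) = (-1)^{l(P,Q)}$ if $P \le Q \le \widetilde P$ and $\mu(P,Q) = 0$ otherwise — that is, $\mu(P,Q)$ is supported exactly on the small intervals $[P,Q]$, with sign $(-1)^{l(P,Q)}$. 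This is the crucial local fact: each factor $\mu(y_{i-1},y_i)$ in the expansion of $\mu^k(P,\1_n)$ is nonzero precisely when $y_{i-1} \le y_i \le \widetilde{y_{i-1}}$, i.e.\ precisely when the multichain $P = y_0 \le y_1 \le \cdots \le y_k = \1_n$ has small intervals.

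First I would therefore rewrite
\[ \mu^k(P,\1_n) = \sum_{\substack{P = y_0 \le y_1 \le \cdots \le y_k = \1_n \\ \text{small intervals}}} \ \prod_{i=1}^k (-1)^{l(y_{i-1},y_i)} = \sum (-1)^{l(P,\1_n)}, \]
using additivity of $l$ along a chain, $l(y_0,y_1)+\cdots+l(y_{k-1},y_k) = l(P,\1_n)$, so that every surviving multichain contributes the same sign $(-1)^{l(P,\1_n)}$. Hence
\[ \mu^k(P,\1_n) = (-1)^{l(P,\1_n)} \cdot \#\{\text{small-interval multichains of length } k \text{ from } P \text{ to } \1_n\}. \]
Now if $k < \delta(P)$, Proposition~\ref{prop:minlength} (whose proof works verbatim for multichains, since it only uses $y_{i-1}\le y_i \le \widetilde{y_{i-1}}$) says no such multichain exists, giving $\mu^k(P,\1_n)=0$. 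If $k = \delta(P)$, then a small-interval multichain of length exactly $\delta(P)$ from $P$ to $\1_n$ can have no repeated entries: a repetition would force a genuine chain of length $< \delta(P)$, again contradicting Proposition~\ref{prop:minlength}. So for $k=\delta(P)$ the multichains are exactly the chains counted by $f(P)$, and we get $\mu^{\delta(P)}(P,\1_n) = (-1)^{l(P,\1_n)} f(P)$.

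The one point that needs care — and the likely main obstacle to a fully rigorous write-up — is the precise description of $\mu(P,Q)$ on $\mathcal{P}_n$: I am claiming it vanishes unless $[P,Q]$ is one of the "small" intervals, i.e.\ unless $Q \le \widetilde P$, and equals $(-1)^{l(P,Q)}$ there. Since $\mathcal{P}_n$ is a distributive lattice, $\mu(P,Q)$ is $(-1)^{l(P,Q)}$ when $[P,Q]$ is a Boolean algebra and $0$ otherwise (standard for distributive, or more generally for any lattice via the cross-cut / complemented-interval criterion); and $[P,Q]$ is Boolean exactly when $Q$ is a join of atoms of $[P,Q]$, i.e.\ a join of covers of $P$, i.e.\ $Q \le \widetilde P$. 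Once this lemma is in hand the rest is the bookkeeping above. I would state this Möbius fact explicitly (perhaps citing the structure of $[P,\widetilde P]\cong B_r$ already noted in the text, plus distributivity to kill the non-Boolean intervals) and then present the two displayed computations.
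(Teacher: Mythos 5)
Your proposal is correct and follows essentially the same route as the paper's proof: the Möbius function of the distributive lattice is supported on the small intervals with sign $(-1)^{l(P,Q)}$, so $\mu^k(P,\1_n)$ expands over small-interval multichains, and Proposition~\ref{prop:minlength} finishes both cases. Your explicit remarks that the minimality argument applies verbatim to multichains and that length-$\delta(P)$ multichains cannot repeat entries are exactly the details the paper leaves as ``follows automatically,'' so they are a welcome, but not divergent, elaboration.
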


\begin{proof}
Since $\mathcal{P}$ is a distributive locally finite lattice, it follows (e.g., see \cite[Proposition 3.7, p.\ 90]{BP1995}) that the M\"obius function of $\mathcal{P}$ has the following formula
\[ \mu(P,Q) = \begin{cases}
(-1)^{l(P,Q)}, & \textrm{ if $P \le Q \le \widetilde{P}$}; \\
0, & \textrm{ otherwise. }
\end{cases} \]
Furthermore, for $k \ge 1$, we can easily check that
\[ \mu^k(P,Q) = \sum\limits \prod\limits_{i=1}^k \mu(P_{i-1}, P_i), \]
where the sum is taken over all multichains $C: P = P_0 \le P_1 \le \cdots \le P_k = Q$ of length $k$ with small intervals, so that
\[ \mu^k(P,Q) = (-1)^{l(P,Q)} \cdot (\textrm{\# $P-Q$ multichains of length $k$ with small intervals}). \]
Then, for $Q = \1_{|P|}$, using Proposition \ref{prop:minlength}, the required formula follows automatically.
\end{proof}

\section{Counting minimal chains with small intervals}\label{section:mapf}

In this section, we evaluate the number $f(P)$ of minimal $P-\1_n$ chains of length $\delta(P)$ with small intervals, for every path $P \in \mathcal{P}$. We will use the notation $P^\prime$ (resp., $P^*$) for the path obtained by turning every low valley of $P$ (resp., every valley of $P$, except the last one if $P$ ends with $d$) into a peak.

Note that $f(\1_n) = 1$, for every $n \ge 0$. Clearly, we have $f(u P) = f(P)$ for every $P \in \mathcal{P}$, so that it is enough to evaluate $f(P)$ when $P$ is a Dyck prefix. In the following result, we give a recursive formula for the map $f$. 

\begin{Proposition}\label{prop:basicrec}
For every path $P \in \mathcal{P}_n \setminus \{\1_n\}$ we have that
\[ f(P) = \sum\limits_{Q \in [P^\prime, \widetilde{P}]} f(Q). \]
\end{Proposition}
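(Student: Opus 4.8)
The plan is to decompose a minimal small-interval chain according to its first step, and then to characterize exactly which paths can occur as that first step. Since, by Proposition~\ref{prop:minlength}, $\delta(P)$ is the least possible length, a chain counted by $f(P)$ is necessarily strict (a repeated entry could be dropped), say $P = P_0 < P_1 < \cdots < P_{\delta(P)} = \1_n$ with $P_i \le \widetilde{P_{i-1}}$ for all $i$. Deleting $P_0$ leaves a small-interval chain from $P_1$ to $\1_n$ of length $\delta(P) - 1$; Proposition~\ref{prop:minlength} applied to $P_1$ forces $\delta(P_1) \le \delta(P) - 1$, so that shorter chain is a minimal $P_1 - \1_n$ chain. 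Conversely, for any $Q$ with $P < Q \le \widetilde{P}$ and $\delta(Q) = \delta(P) - 1$, prepending $P$ to any minimal $Q - \1_n$ chain with small intervals yields a minimal $P - \1_n$ chain with small intervals, and these two operations are mutually inverse. Hence
\[ f(P) = \sum_{Q} f(Q), \]
the sum over all $Q$ with $P \le Q \le \widetilde{P}$ and $\delta(Q) = \delta(P) - 1$ (the extra condition $P < Q$ is automatic, since $\delta(P) \ge 1 \ne \delta(P) - 1$). So the proposition reduces to the identity
\[ \{\, Q : P \le Q \le \widetilde{P},\ \delta(Q) = \delta(P) - 1 \,\} = [\, P^\prime, \widetilde{P} \,]. \]

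To prove it I would use that $[P, \widetilde{P}]$ is isomorphic to a Boolean lattice via $S \mapsto P_S$, where $S$ ranges over the subsets of the valley set $V$ of $P$ and $P_S$ is obtained from $P$ by turning the valleys in $S$ into peaks (so $P_S \le P_T$ iff $S \subseteq T$); in this notation $P^\prime = P_{V_0}$, where $V_0 \subseteq V$ is the set of low valleys of $P$, and therefore $P_S \in [P^\prime, \widetilde{P}]$ iff $S \supseteq V_0$. Everything then rests on two elementary local observations about the passage $P \mapsto P_S$. (a) Distinct valleys of a path lie at non-consecutive positions, so the one or two steps realizing a valley $v \notin S$ are left unchanged; hence each $v \in V \setminus S$ is still a valley of $P_S$, at the same height. (b) Turning a single valley $w$ of height $h$ into a peak raises $w$ to height $h + 2$ and can alter the peak/valley status only of the two neighbours of $w$, each of height $h + 1$; hence the height of any point that is a valley of $P_S$ but not of $P$ exceeds by exactly one the $P$-height of some $w \in S$.

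With (a) and (b) in hand I would finish as follows. First, every $Q = P_S \in [P, \widetilde{P}]$ satisfies $\delta(Q) \ge \delta(P) - 1$: trivially if $S = \emptyset$, and otherwise $P < Q \le \widetilde{P}$, so prepending $P$ to a minimal $Q - \1_n$ chain gives a small-interval $P - \1_n$ chain of length $\delta(Q) + 1 \ge \delta(P)$. Now if $S \not\supseteq V_0$, pick a low valley $v \in V_0 \setminus S$; by (a) it is a valley of $Q$ of height $\lv(P)$, so $\lv(Q) \le \lv(P)$ and, by Lemma~\ref{lemma:degreeformula}, $\delta(Q) \ge \delta(P) > \delta(P) - 1$; thus $Q$ is not in the left-hand set, and the left-hand set is contained in $\{ P_S : S \supseteq V_0 \} = [P^\prime, \widetilde{P}]$. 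Conversely, if $S \supseteq V_0$, then in $Q$ all low valleys of $P$ have become peaks, the surviving old valleys of $P$ (those in $V \setminus S$) have height $> \lv(P)$ by (a), and the new valleys of $Q$ have height $\ge \lv(P) + 1$ by (b) (since any $w \in S$ has $P$-height $\ge \lv(P)$); hence every valley of $Q$ has height $\ge \lv(P) + 1$, giving $\lv(Q) \ge \lv(P) + 1$ and $\delta(Q) \le \delta(P) - 1$, so $\delta(Q) = \delta(P) - 1$ and $Q$ lies in the left-hand set. (The only $Q$ with no valley at all is $Q = \1_n$, which arises solely for $P = u^{|P|-1}d$ and $Q = P^\prime = \1_n$, where $\delta(Q) = 0 = \delta(P) - 1$ directly.) This proves the identity, and hence the proposition.

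The step I expect to be the main obstacle is the careful verification of the local observations (a) and (b) — in particular tracking the valleys newly created by the replacements $du \mapsto ud$ when several nearby valleys are turned simultaneously (two turned valleys two positions apart share a neighbour, which then itself becomes a valley), and handling the boundary cases where a turned valley is a trailing $d$, so that the final step of the path changes. Everything after that is routine bookkeeping with Lemma~\ref{lemma:degreeformula} and Proposition~\ref{prop:minlength}.
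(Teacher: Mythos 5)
Your proposal is correct and follows essentially the same route as the paper: decompose the minimal small-interval chains by their second member, use Proposition~\ref{prop:minlength} together with Lemma~\ref{lemma:degreeformula} (degree versus lowest-valley height) to show the admissible second members are exactly the paths in $[P^\prime,\widetilde{P}]$, and conclude by the evident bijection with pairs (second member, minimal tail chain). Your Boolean-lattice observations (a) and (b) merely spell out the step the paper states tersely as ``this shows that $P_1\in[P^\prime,\widetilde{P}]$'', so no genuinely different idea is involved.
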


\begin{proof}
Let $C : P = P_0 \le P_1 \le \cdots \le P_k = \1_n$, where $k = \delta(P)$, be a $k$-chain from $P$ to $\1_n$ with small intervals. Then, since $P_{i-1} \le P_i \le \widetilde{P_{i-1}}$ for every $i \in [k]$, we can easily prove by induction that $P_i^{(k-i)} = \1_n$, i.e., $\delta(P_i) = k-i$, for every $i \in [k]$. In particular, $\delta(P_1) = k-1$, which by Lemma~\ref{lemma:degreeformula} gives that $\lv(P_1) = \lv(P) +1$. This shows that $P_1 \in [P^\prime, \widetilde{P}]$. Moreover, if we delete $P$ from $C$ we obtain a $(k-1)$-chain from $P_1$ to $\1_n$ with small intervals.

On the other hand, given $Q \in [P^\prime, \widetilde{P}]$, by adding $P$ in the beginning of every $\delta(Q)$-chain from $Q$ to $\1_n$ with small intervals, we obtain a $\delta(P)$-chain from $P$ to $\1_n$ with small intervals.

Thus, the result follows automatically by decomposing the $\delta(P)$-chains from $P$ to $\1_n$ with small intervals according to their second member.
\end{proof}

\begin{Corollary}
For every $P \in \mathcal{P}$ we have that
\[ f(P) = 1 \textrm{ iff all valleys of $P$ have the same height.} \] 
\end{Corollary}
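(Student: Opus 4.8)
The plan is to exploit the recursion $f(P)=\sum_{Q\in[P',\widetilde P]}f(Q)$ of Proposition~\ref{prop:basicrec} together with the trivial bound $f(Q)\ge 1$, which holds for every path $Q$ because the canonical chain $Q=Q^{(0)}\le Q^{(1)}\le\cdots\le\1_{|Q|}$ is always a minimal chain with small intervals. The observation that drives both directions is that $[P',\widetilde P]$ consists of a single element exactly when all valleys of $P$ have the same height: if every valley of $P$ lies at height $\lv(P)$, then ``turning the low valleys'' and ``turning all valleys'' is one and the same operation, so $P'=\widetilde P$; conversely, if $P$ has a valley $v$ at height $>\lv(P)$, then at the position of $v$ the path $P'$ still has the height of $v$ (forming $P'$ only raises low-valley points), whereas $\widetilde P$ has that height increased by $2$, so $P'\ne\widetilde P$.

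Granting this, the implication ``$f(P)=1\Rightarrow$ all valleys of $P$ have equal height'' is immediate (and vacuous when $P=\1_n$): from $1=f(P)=\sum_{Q\in[P',\widetilde P]}f(Q)\ge\bigl|[P',\widetilde P]\bigr|$ we get $\bigl|[P',\widetilde P]\bigr|=1$, i.e.\ $P'=\widetilde P$, i.e.\ all valleys of $P$ are at height $\lv(P)$. For the converse I would induct on $\delta(P)$: the base case $\delta(P)=0$ is $P=\1_n$, with $f(P)=1$; and if $\delta(P)\ge 1$ and all valleys of $P$ have the same height, then $P'=\widetilde P$, so the recursion gives $f(P)=f(\widetilde P)$, and since $\delta(\widetilde P)=\delta(P)-1$ the induction closes provided one knows that $\widetilde P$ again has all of its valleys at a single height.

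Thus everything reduces to the structural claim: \emph{if every valley of $P$ has height $h$ and $\widetilde P\ne\1_{|P|}$, then every valley of $\widetilde P$ has height $h+1$}. I would prove this by a short case analysis using the description of $\widetilde P$ as the path whose step sequence agrees with that of $P$ except that it is $u$ at each valley position of $P$ and $d$ at each ``valley${+}1$'' position, and whose height sequence agrees with that of $P$ except that it is larger by $2$ (hence equal to $h+2$) at each valley position of $P$; these local changes never interfere, since no two valleys of a path occupy adjacent positions. Let $m$ be a valley position of $\widetilde P$. It cannot be a valley position of $P$ (there the step of $\widetilde P$ is $u$). If $m=i+1$ for a valley $i$ of $P$, then $m$ is not a valley position of $P$, so $\widetilde P$ has at $m$ the same height as $P$, namely $h+1$. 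Otherwise the step of $\widetilde P$ at $m$ equals that of $P$, which is therefore $d$; the endpoint case $m=|P|$ is excluded (then $p_{|P|}=d$ would make $|P|$ a valley of $P$), so $m$ being a valley of $\widetilde P$ forces the step of $\widetilde P$ at $m+1$ to be $u$, and since the step of $P$ at $m$ is $d$ while $m$ is not a valley of $P$, the position $m+1$ must be a valley of $P$; consequently $P$, and hence $\widetilde P$, has height $h+1$ at $m$. In every case the valley of $\widetilde P$ at $m$ sits at height $h+1$. This claim — in particular, accounting for the ``new'' valleys of $\widetilde P$ that appear just before the long descents of $P$ — is the only genuinely geometric point, and is where I expect the main difficulty to lie; the rest is bookkeeping with the recursion and the elementary properties of $\delta$ recalled before Lemma~\ref{lemma:degreeformula}.
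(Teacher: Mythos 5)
Your argument is correct and follows the route the paper intends: the Corollary is stated as an immediate consequence of Proposition~\ref{prop:basicrec}, and your proof is exactly the natural filling-in — the interval $[P^\prime,\widetilde{P}]$ is a singleton iff all valleys of $P$ are low, $f\ge 1$ always (via the canonical chain $P\le\widetilde{P}\le\cdots$), and the filling of a path whose valleys all lie at height $h$ has all its valleys at height $h+1$. Your case analysis for this last structural claim (including the ``new'' valleys created next to former valleys and long descents) is sound, so the proposal stands as written.
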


Every path $P \in \mathcal{P}$ can be decomposed (not necessarily uniquely) 
as a product of a Dyck suffix $P_1$ followed by a Dyck prefix $P_2$. In the following result we give a recursive formula of $f$ that utilizes this decomposition. 

\begin{Proposition}\label{prop:suffixprefix}
If $P = P_1 P_2$, where $P_1$ is a Dyck suffix and $P_2$ is a Dyck prefix, then $f(P) = f(P_1) f(d P_2)$.
\end{Proposition}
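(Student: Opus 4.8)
The plan is to analyze how the low valleys of $P = P_1 P_2$ behave under the filling operation and to set up a bijection between minimal $P - \1_n$ chains with small intervals and pairs of minimal chains, one from $P_1$ and one from $dP_2$. First I would record the basic structural fact: since $P_1$ is a Dyck suffix, its points all have height $\le 0$ with the junction point at height $0$, while $P_2$ is a Dyck prefix, so its points have height $\ge 0$; hence the lowest valleys of $P$ all lie in the $P_1$ part (assuming $P_1 \ne \varepsilon$; the degenerate cases $P_1 = \varepsilon$ or $P_2 = \varepsilon$ should be checked separately and follow from $f(uP) = f(P)$ together with $f(d) = f(\varepsilon) = 1$). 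Consequently $\lv(P) = \lv(P_1) < 0 \le$ the height of every valley of $P_2$, and $\lv(dP_2) = -1$ with the last valley of $dP_2$ (the one created by the leading $d$, if $P_2$ starts with $u$, or otherwise internal) needing care — this is exactly why the statement uses $dP_2$ rather than $P_2$: prepending $d$ forces the relevant valley structure so that the degrees add up correctly.

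Next I would verify the degree additivity $\delta(P) = \delta(P_1) + \delta(dP_2)$, or rather the precise relation forced by Lemma~\ref{lemma:degreeformula}: $\delta(P) = |P| - 1 - \lv(P) = |P_1| + |P_2| - 1 - \lv(P_1)$, and one checks this equals $\delta(P_1) + \delta(dP_2)$ after expanding $\delta(P_1) = |P_1| - 1 - \lv(P_1)$ and $\delta(dP_2) = |dP_2| - 1 - \lv(dP_2) = |P_2| + 1 - 1 - (-1) = |P_2| + 1$. (A small check: $\lv(dP_2) = -1$ precisely because the leading $d$ of $dP_2$ reaches height $-1$ and creates a valley there, while all of $P_2$ stays at height $\ge 0$.) Then the central step: I would show that along any minimal $P - \1_n$ chain with small intervals, the filling operation acts ``in parallel'' on the two halves. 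The key observation is that turning a valley into a peak is a purely local move, and because the $P_1$-valleys sit at strictly negative heights while the $P_2$-valleys sit at nonnegative heights, no single covering relation mixes them; moreover, by Proposition~\ref{prop:basicrec} the first step of a minimal chain must turn \emph{every} low valley into a peak, and all low valleys are in $P_1$. This lets me induct using Proposition~\ref{prop:basicrec}: writing $P = P_1 P_2$, the interval $[P', \widetilde P]$ over which the recursion sums factors as a product $[P_1', \widetilde{P_1}] \times [(dP_2)', \widetilde{dP_2}]$ suitably interpreted — more precisely, turning low valleys and then arbitrary valleys commutes with the concatenation $P_1 \mapsto P_1 P_2$, so the map $Q \mapsto (Q_1, Q_2)$ (restricting a path in $[P', \widetilde P]$ to its two halves) is a bijection onto $[P_1', \widetilde{P_1}] \times [dP_2', \widetilde{dP_2}]$, and each summand factor $f(Q) = f(Q_1) f(dQ_2)$ by the induction hypothesis.

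Putting this together: by induction on $\delta(P)$ (base case $\delta(P) = 0$ trivial, or $f(P)=1$ when all valleys have equal height, handled by the Corollary since then either $P_1$ or $P_2$ is forced to be trivial-ish), Proposition~\ref{prop:basicrec} gives
\[
f(P) = \sum_{Q \in [P', \widetilde P]} f(Q) = \sum_{Q_1 \in [P_1', \widetilde{P_1}]} \sum_{Q_2} f(Q_1) f(dQ_2) = \Big( \sum_{Q_1} f(Q_1) \Big) \Big( \sum_{Q_2} f(dQ_2) \Big) = f(P_1) f(dP_2),
\]
where the last equality uses Proposition~\ref{prop:basicrec} applied to $P_1$ and to $dP_2$ separately. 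The main obstacle I anticipate is making the factorization of the interval $[P', \widetilde P]$ genuinely rigorous, i.e., checking carefully that $(P_1 P_2)' = P_1' P_2$ and $\widetilde{P_1 P_2} = \widetilde{P_1}\, \widetilde{P_2}$ at the junction point — one must confirm the junction point (height $0$, a valley of $P$ when $P_1$ ends in $d$ and $P_2$ starts in $u$) is handled consistently, and that the ``except the last one'' clause in the definition of $P^*$ versus $P'$ does not cause an off-by-one at the seam; this is exactly the role of the extra $d$ in $dP_2$, and getting that bookkeeping exactly right is where the care is needed.
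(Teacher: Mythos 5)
Your overall skeleton (apply Proposition~\ref{prop:basicrec}, factor the sum over $[P^\prime,\widetilde P]$, and induct) is indeed the paper's strategy, but the structural claims you build it on are false, and the part you defer as ``bookkeeping at the seam'' is in fact the whole content of the proof. First, the geometry is off: in a decomposition $P=P_1P_2$ the junction sits at the \emph{global minimum} of $P$, not at height $0$; $P_1$ stays weakly above its final height and $P_2$ weakly above its initial height. Hence the low valleys of $P$ need not lie in the $P_1$ part: every return of $P_2$ to the junction level is a low valley of $P$ inside $P_2$ (e.g.\ $P_1=dd$, $P_2=udu$, where $P=ddudu$ has low valleys in both halves), and valleys of $P_1$ need not be at negative height. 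Consequently $(P_1P_2)^\prime\ne P_1^\prime P_2$ and $\widetilde{P_1P_2}\ne\widetilde{P_1}\,\widetilde{P_2}$ in general (already $\widetilde{dudu}=udud$ while $\widetilde{dud}\,\widetilde{u}=uduu$), and ``restricting $Q\in[P^\prime,\widetilde P]$ to its two halves'' cannot be a bijection onto $[P_1^\prime,\widetilde{P_1}]\times[(dP_2)^\prime,\widetilde{dP_2}]$: the lengths do not even match, since the second factor carries the prepended $d$. Your degree check is also wrong: since $\lv(P)=\lv(P_1)$ one gets $\delta(P)=\delta(P_1)+\delta(dP_2)-1$, not $\delta(P_1)+\delta(dP_2)$ (try $P_1=dud$, $P_2=u$). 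Finally, the degenerate case $P_1=\varepsilon$ cannot be waved through: it would assert $f(P_2)=f(dP_2)$, which fails without a return point ($f(uud)=1$ but $f(duud)=2$) and, when it does hold, is the \emph{subsequent} Corollary, proved using this very proposition.

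What is missing is precisely the seam analysis, and the paper's proof is essentially nothing but that. It refines the decomposition as $P_1=R_1ad$, $P_2=ubR_2$ (isolating the portions $a,b$ at the junction level), computes $P^\prime=R_1a^\prime u d\, b^\prime R_2$ and $\widetilde P=R_1^*a^*ud\,\widetilde b\,\widetilde{R_2}$, and shows that every $Q\in[P^\prime,\widetilde P]$ factors uniquely as $Q=Q_1udQ_2$ with $Q_1\in[R_1a^\prime,R_1^*a^*]$ and $Q_2\in[b^\prime R_2,\widetilde b\,\widetilde{R_2}]$; the inserted $ud$ at the seam is what replaces your hoped-for product of halves. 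It then needs $f(Q_1udQ_2)=f(Q_1u)\,f(udQ_2)$, obtained by applying the statement being proved to the path $Q$ itself, which has the \emph{same length} as $P$ but lies strictly above it; this forces an induction on length together with the dual order (your induction on $\delta(P)$ could be made to serve a similar purpose, but you never set up the factorization it would be applied to). Re-summing and invoking Proposition~\ref{prop:basicrec} for $P_1$ and for $dP_2$ then yields $f(P_1)f(dP_2)$. As written, your proposal asserts the conclusion of this analysis rather than performing it, so it has a genuine gap.
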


\begin{proof} Without loss of generality we may assume that $P$ starts with $d$, $P_1 \ne d$ and $P_2 \ne \varepsilon$.

We use induction with respect to the length and to the (dual) partial order, i.e., assuming that the result holds for every path $Q$ (that starts with $d$) for which $|Q| < |P|$, or $|Q| = |P|$ and $Q > P$, we will prove that the result holds for $P$.

We decompose $P_1$, $P_2$ as follows:
\[ P_1 = R_1 a d, \qquad P_2 = u b R_2, \]
where $R_1$ (resp., $R_2$) is a Dyck suffix (resp., Dyck prefix), $a$ (resp., $b$) may be either empty, or a path that starts with $d$ (resp., ends with $u$), ends at height $0$ and it is bounded by the line $y = -1$ (see Figure \ref{fig:SP}). 

\begin{figure}[ht]
\begin{center}
\psset{unit=1em}
\begin{pspicture}(-2,0)(35,4)
\rput[r](0,1){$P=$}
\psellipticarc[](2,1)(9,2.15){0}{120} \psline[linestyle=dashed](1,3)(4,1)(11,1) \rput[b](6,1.5){$R_1$}
\psellipticarc[](13,1)(2,1.5){0}{180}  \psline[linewidth=2pt](11,1)(12,0) 
\psline[linestyle=dashed](12,0)(14,0)(15,1) \psline[linewidth=2pt](15,1)(16,0)(17,1)  \rput[b](13,1){$a$}
\psellipticarc[](19,1)(2,1.5){0}{180} \psline[linestyle=dashed](17,1)(18,0)(20,0) 
\psline[linewidth=2pt](20,0)(21,1) \rput[b](19,1){$b$}
\psellipticarc[](30,1)(9,2.15){60}{180} \psline[linestyle=dashed](21,1)(28,1)(31,3) \rput[b](26,1.5){$R_2$}
\psline[linestyle=dotted](1,0)(31,0)
\rput[l](32,0){$y=-1$}
\end{pspicture}
\end{center}
\caption{The decomposition of $P=P_1P_2$}
\label{fig:SP}
\end{figure}
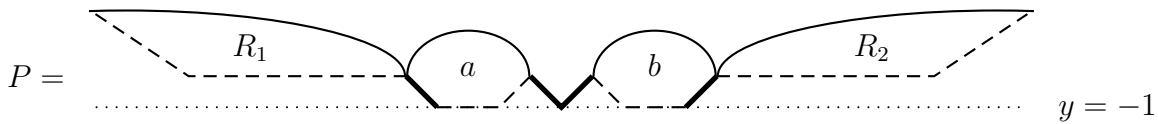

Then, we have that
\[ P_1^\prime = R_1 a^\prime u, \qquad  (d P_2)^\prime = u d b^\prime R_2, \qquad P^\prime = R_1 a^\prime u d b^\prime R_2, \]
\[ \widetilde{P_1} = R_1^* a^* u, \qquad \widetilde{d P_2} = u d \ \widetilde{b} \ \widetilde{R_2}, \qquad \widetilde{P} = R_1^* a^* u d \ \widetilde{b} \  \widetilde{R_2}. \] 
It follows that every $Q \in [P^\prime, \widetilde{P}]$ can be uniquely decomposed as
\[ Q = Q_1 u d Q_2, \]
where $Q_1 \in [R_1 a^\prime, R_1^* a^*]$, $Q_2 \in [b^\prime R_2, \widetilde{b} \widetilde{R_2}]$.
Clearly, since $Q_1$ is a Dyck suffix and $Q_2$ is a Dyck prefix with $Q > P$ and $|Q_1|, |Q_2| \le |P| - 2$, we have that 
\begin{align*} f(Q) 
& = f(Q_1 u d Q_2) = f(Q_1 ud) f(d Q_2) = f(Q_1) f(dud) f(u d Q_2) \\
& = f(Q_1) f(du) f(ud Q_2) = f(Q_1 u) f(u d Q_2), \end{align*}
and therefore
\begin{align*}
f(P) 
& = \sum\limits_{Q \in [P^\prime, \widetilde{P}]} f(Q) = \sum\limits_{\stackrel{Q_1 \in [R_1 a^\prime, R_1^* a^*]}{Q_2 \in [b^\prime R_2, \widetilde{b} \widetilde{R_2}]}} f(Q_1 u) f(u d Q_2) \\
& = \sum\limits_{Q_1 u \in [R_1 a^\prime u, R_1^* a^* u]} f(Q_1 u) \sum\limits_{u d Q_2 \in [u d b^\prime R_2, ud \widetilde{b} \widetilde{R_2}]}  f(u d Q_2) \\
& = \sum\limits_{Q_1 u \in [P_1^\prime, \widetilde{P_1}]} f(Q_1 u) \sum\limits_{u d Q_2 \in [(d P_2)^\prime, \widetilde{d P_2}]} f(u d Q_2)\\
& = f(P_1) f(d P_2),
\end{align*}
completing the proof.
\end{proof}

\begin{Corollary}
If $P$ is a Dyck prefix with at least one return point, then $f(dP) = f(P)$.
\end{Corollary}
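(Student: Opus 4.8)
\emph{Strategy.} The plan is to prove, by strong induction on $|P|$, the slightly stronger statement: $f(dP) = f(P)$ for \emph{every} Dyck prefix $P$ having at least one return point. (For $|P|\le 1$ there is nothing to prove, since no such $P$ exists.) Fix such a $P$, let $v$ be its first return point, and factor $P = p\,P''$, where $p$ is the portion of $P$ from the origin to $v$ and $P''$ the portion from $v$ to the end. Since $v$ has height $0$ and is the first return, $p$ is a prime Dyck path, while $P''$ is a Dyck prefix. A Dyck path is a Dyck suffix, and $dp$ is a Dyck suffix too (it stays weakly above its endpoint, at height $-1$). Hence Proposition~\ref{prop:suffixprefix}, applied to $P = p\cdot P''$ and to $dP = (dp)\cdot P''$, gives $f(P) = f(p)\,f(dP'')$ and $f(dP) = f(dp)\,f(dP'')$, so everything reduces to proving $f(dp) = f(p)$. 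Writing $p = u\,g\,d$ with $g \in \mathcal{D}$ and using $f(uQ) = f(Q)$, this becomes $f(dugd) = f(gd)$.

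If $g = \varepsilon$ this is immediate, since then $dugd = dud$ and $gd = d$ both have all of their valleys at height $-1$, so $f(dugd) = 1 = f(gd)$. Assume now $g\ne\varepsilon$. The next step is to record the effect of $(\cdot)'$ and $\widetilde{(\cdot)}$ on $gd$ and $dugd$. Because $g$ is a Dyck path ending with $d$, appending one more $d$ merely prolongs the last descent of $g$: the valleys of $g$ other than its last one survive unchanged (at heights $\ge 0$), while the last valley of $g$ is pushed down one level to become the new endpoint of $gd$, which is therefore the unique lowest valley. Turning it into a peak gives $(gd)' = g\,u$, and turning every valley gives $\widetilde{gd} = g^{*}u$. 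Prepending $du$ adds exactly one further lowest valley, at the front; turning both lowest valleys into peaks (the leading $du$ flips to $ud$) gives $(dugd)' = ud\,g\,u$, and turning all valleys gives $\widetilde{dugd} = ud\,g^{*}u$. Since prepending a fixed path to every member of an interval produces an interval, and likewise appending one when the two endpoints of the interval end at the same height (as $g$ and $g^{*}$ do, both at height $0$), we obtain $[(gd)',\widetilde{gd}] = \{\,h\,u : h \in [g,g^{*}]\,\}$ and $[(dugd)',\widetilde{dugd}] = \{\,ud\,h\,u : h \in [g,g^{*}]\,\}$, so Proposition~\ref{prop:basicrec} yields
\[ f(gd) = \sum_{h \in [g,g^{*}]} f(h\,u), \qquad f(dugd) = \sum_{h \in [g,g^{*}]} f(ud\,h\,u). \]

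To conclude I would match the two sums term by term. Because $g^{*}$ keeps the last valley of $g$ (its endpoint at height $0$) in place, every $h \in [g,g^{*}]$ ends at height $0$; being $\ge g$ it has all heights nonnegative, so $h$ is a nonempty Dyck path, $h\,u$ is a Dyck prefix with a return point (the endpoint of $h$), and $|h\,u| = |g|+1 < |p| \le |P|$. The induction hypothesis applied to the shorter path $h\,u$ gives $f(d(h\,u)) = f(h\,u)$, whence $f(ud\,h\,u) = f\big(u\cdot d(h\,u)\big) = f(d(h\,u)) = f(h\,u)$. Summing over $h\in[g,g^{*}]$ gives $f(dugd) = f(gd)$, which closes the induction and proves the Corollary.

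\emph{Main obstacle.} The part I expect to be the real work is establishing the four identities $(gd)' = g\,u$, $\widetilde{gd} = g^{*}u$, $(dugd)' = ud\,g\,u$, $\widetilde{dugd} = ud\,g^{*}u$ rigorously: one must classify the valley points of $gd$ and of $dugd$, check that the valleys of $g$ away from its end survive unchanged, that the last valley of $g$ migrates to the endpoint one level lower, and that converting the leading $du$ of $dugd$ produces the prefix $ud$ rather than spawning a fresh low valley. Everything after that is bookkeeping, the one genuinely nontrivial ingredient being the inductive invocation of the statement itself on the strictly shorter Dyck prefix $h\,u$.
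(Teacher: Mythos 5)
Your proposal is correct and follows essentially the same route as the paper: decompose $P$ at its first return into a prime Dyck path times a Dyck prefix, reduce to the prime factor via Proposition~\ref{prop:suffixprefix}, then apply Proposition~\ref{prop:basicrec} to both $dugd$ and $gd$ and match the resulting sums over $[g,g^*]$ term by term using the induction hypothesis. The only (harmless) difference is bookkeeping: the paper invokes the hypothesis on the Dyck path $b$ together with extra applications of Proposition~\ref{prop:suffixprefix} inside the sum, whereas you invoke it directly on the shorter Dyck prefix $hu$.
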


\begin{proof} We use induction with respect to the length of the path.  Clearly, since $P$ has at least one return point, it can be written as $P = u a d R$, where $a \in \mathcal{D}$ and $R$ is a Dyck prefix. Then, using Propositions \ref{prop:basicrec}, \ref{prop:suffixprefix} and the induction hypothesis, we have that
\begin{align*}
f(d u a d)
& = \sum\limits_{Q \in [u d a u, u d a^* u]} f(Q)  = \sum\limits_{b \in [a, a^*]} f( u d b u)  = \sum\limits_{b \in [a, a^*]} f( u d b) f(d u) \\
& = \sum\limits_{b \in [a, a^*]} f( u b) f( d u) = \sum\limits_{b \in [a, a^*]} f( u b u) = \sum\limits_{Q \in [u a u, u a^* u]} f(Q) = f(u a d).
\end{align*} 
It follows that
$f(d P) = f(d u a d) f(d R) = f(u a d) f(d R) = f(P)$. \end{proof}

 From the two previous results it follows that the map $f$ on the set of Dyck paths is multiplicative. Moreover, since every Dyck prefix can be uniquely decomposed in the form $a Q$, where $a = \varepsilon$ or $a = u a_1 d \cdots u a_k d $, $a_i \in \mathcal{D}$, $i \in [k]$ and $Q = \varepsilon$ or $Q = u P$ for some Dyck prefix $P$, for the evaluation of $f$ it is enough to restrict ourselves to the two cases $f(u a d)$ and $f(d u P)$, where $a \in \mathcal{D}$ and $P$ is a Dyck prefix.  For this, we introduce a new kind of multichains for Dyck paths, based on the heights of the valleys of the paths. 

We say that a multichain of Dyck paths $C : \sigma_0 \le \sigma_1 \le \cdots \le \sigma_h$, where $h = \hv(\sigma_0)$ (the height of the highest valley of $\sigma_0$) is {\em of type $\V$} iff for every $j \in [h]$ the paths $\sigma_j$, $\sigma_{j-1}$ have the same valleys at every height at most $h - j$. 

\begin{Example}\label{example:vchain} The multichain $C : \sigma_0 \le \sigma_1 \le \sigma_2 \le \sigma_3$ with
\begin{align*}
\sigma_0 & = u^2 d u^2 d u^2 d^2 u^2 d^3 u d^2 u^3 d u^2 d u d^2 u d u d^3, \\
\sigma_1 & = u^2 d u^2 d u^2 d^2 u^2 d^3 u d^2 u^3 d u^3 d^3 u d u d^3, \\
\sigma_2 & = u^2 d u^4 d^2 u d u d^3 u d^2 u^5 d u^2 d^4 u d^3, \\
\sigma_3 & = u^4 d u^2 d^2 u d u d u d^4 u^6 d u d^4 u d^3
\end{align*}
is of type $\V$, whereas the multichain $\sigma_0 \le \sigma_1 \le \sigma_2 \le s$ with $s = u^5 d u^3 d^3 u d^3 u d u^4 d u d u d^3 u d^4$ 
is not of type $\V$ (see Figure~\ref{fig:vexample}). 
\end{Example}

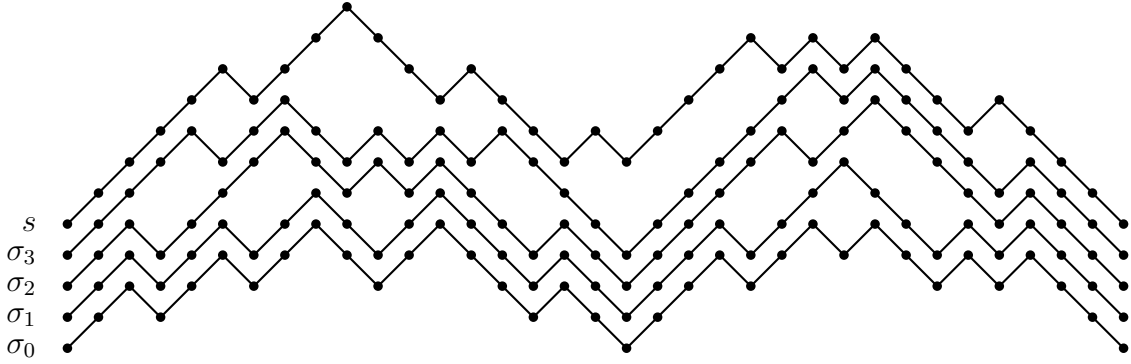
\begin{figure}[ht]
\begin{center}
\psset{unit=1em}
\begin{pspicture}(-1,0)(34,11)

\psline[showpoints=true]
(0,0)(1,1)(2,2)(3,1)(4,2)(5,3)(6,2)(7,3)(8,4)(9,3)(10,2)(11,3)(12,4)(13,3)(14,2)(15,1)(16,2)(17,1)(18,0)(19,1)(20,2)
(21,3)(22,2)(23,3)(24,4)(25,3)(26,4)(27,3)(28,2)(29,3)(30,2)(31,3)(32,2)(33,1)(34,0) 
\rput[r](-1,0){$\sigma_0$}

\psline[showpoints=true]
(0,1)(1,2)(2,3)(3,2)(4,3)(5,4)(6,3)(7,4)(8,5)(9,4)(10,3)(11,4)(12,5)(13,4)(14,3)(15,2)(16,3)(17,2)(18,1)(19,2)(20,3)
(21,4)(22,3)(23,4)(24,5)(25,6)(26,5)(27,4)(28,3)(29,4)(30,3)(31,4)(32,3)(33,2)(34,1)   
\rput[r](-1,1){$\sigma_1$}

\psline[showpoints=true](0,2)(1,3)(2,4)(3,3)(4,4)(5,5)(6,6)(7,7)(8,6)(9,5)(10,6)(11,5)(12,6)(13,5)(14,4)(15,3)(16,4)(17,3)(18,2)
(19,3)(20,4)(21,5)(22,6)(23,7)(24,6)(25,7)(26,8)(27,7)(28,6)(29,5)(30,4)(31,5)(32,4)(33,3)(34,2)   
\rput[r](-1,2){$\sigma_2$}

\psline[showpoints=true](0,3)(1,4)(2,5)(3,6)(4,7)(5,6)(6,7)(7,8)(8,7)(9,6)(10,7)(11,6)(12,7)(13,6)(14,7)
(15,6)(16,5)(17,4)(18,3)(19,4)(20,5)(21,6)(22,7)(23,8)(24,9)(25,8)(26,9)(27,8)(28,7)(29,6)(30,5)(31,6)(32,5)(33,4)(34,3)  
\rput[r](-1,3){$\sigma_3$}

\psline[showpoints=true](0,4)(1,5)(2,6)(3,7)(4,8)(5,9)(6,8)(7,9)(8,10)(9,11)
(10,10)(11,9)(12,8)(13,9)(14,8)(15,7)(16,6)(17,7)(18,6)
(19,7)(20,8)(21,9)(22,10)(23,9)(24,10)(25,9)(26,10)(27,9)(28,8)(29,7)(30,8)(31,7)(32,6)(33,5)(34,4)  
\rput[r](-1,4){$s$}
\end{pspicture}
\end{center}
\caption{The multichain of example \ref{example:vchain}}\label{fig:vexample}
\end{figure}

For $a, s \in \mathcal{D}$ with $a \le s$ we denote by $\V(a,s)$ the number of all $a-s$ multichains of type $\V$. Clearly, we have $\V(a,s) \ne 0$ iff $a, s$ have exactly the same low valleys. Furthermore, we can easily check that
\[ 
\V( u a_1 d \cdots u a_k d, u s_1 d \cdots u s_k d) = \prod\limits_{i=1}^k \V(u a_i d, u s_i d) 
\]
when $a_i \le s_i$ for every $i \in [k]$, and that
\begin{equation}\label{eq:vprime} 
\V(uad, usd) = \sum\limits_{t \le s} \V(a, t). \end{equation}

In the following Proposition, we give an alternative formula of $\V$ on the pairs of prime Dyck paths, which will be used in the sequel.

\begin{Proposition}\label{prop:vbijection}
For every $a, s \in \mathcal{D}$ with $a \le s$ we have that
\[ \V(uad, usd) = \sum\limits_{w \in [a, a^*]} \V(w,s). \]
\end{Proposition}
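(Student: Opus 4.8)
The goal is to prove $\V(uad,usd)=\sum_{w\in[a,a^*]}\V(w,s)$ for $a\le s$ in $\mathcal{D}$. Comparing this with the already-established identity \eqref{eq:vprime}, namely $\V(uad,usd)=\sum_{t\le s}\V(a,t)$, the task reduces to producing a bijection (or at least an equality of cardinalities) between the disjoint unions
\[
\bigsqcup_{t\le s}\{\,a\text{--}t\text{ multichains of type }\V\,\}
\quad\text{and}\quad
\bigsqcup_{w\in[a,a^*]}\{\,w\text{--}s\text{ multichains of type }\V\,\}.
\]
The plan is to build this bijection by exploiting the "shift by one height level" that the passage from $uad$ to $usd$ performs on the valley-height structure. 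Intuitively, a type-$\V$ multichain from $a$ to $t$, together with the information of how $t$ sits below $s$, should be reassembled into a type-$\V$ multichain that starts somewhere in $[a,a^*]$ (the bottom valley of $a$, which has the lowest height, gets "activated" by the $ud$ wrapper, moving it up) and ends at $s$.

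First I would set up the bijection explicitly. Given $t\le s$ and an $a$--$t$ multichain $C:a=\tau_0\le\tau_1\le\cdots\le\tau_{h}=t$ of type $\V$ with $h=\hv(a)$, I would prepend a new bottom path: define $w$ to be the path obtained from $a$ by turning its low valleys into peaks — i.e.\ $w\in[a,a^*]$ is governed by which low valleys we choose to raise, and the "height-$0$ coordinate" of the new chain records exactly that choice. More precisely, since $uad$ has highest valley at height $\hv(a)+1$, a type-$\V$ multichain out of $uad$ (to $usd$) has length $\hv(a)+1$, one longer than $C$; its first step must fix all valleys of height $\le \hv(a)$, hence only affects the low valleys of $a$ (those at height $\lv(a)$, inside the $a$-block these are at height $0$) — and raising a chosen subset of them yields precisely an element of $[a,a^*]$. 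So the correspondence should be: the extra bottom level of the $w$--$s$ chain $\leftrightarrow$ the pair (which low valleys to raise, i.e.\ $w$; the residual chain), and the residual data matches up with $(t,C)$ after passing through the wrapper $u(\cdot)d$. I would check carefully that the "type $\V$" condition translates correctly under this correspondence: the height bookkeeping shifts by one because of the $u\cdots d$ wrapper, so the condition "$\sigma_j,\sigma_{j-1}$ agree on valleys of height $\le h-j$" for the $w$--$s$ chain becomes the analogous condition for the $a$--$t$ chain.

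Second, I would verify surjectivity/injectivity and well-definedness in both directions, most carefully the claim that $t$ (the top of the $a$-chain) ranges over exactly $\{t:t\le s\}$: from a $w$--$s$ chain with $w\in[a,a^*]$, stripping the first step (which only re-lowers some low valleys to recover $a$ from $w$) and unwrapping gives an $a$--$t$ chain where $t$ is constrained only by $t\le s$ — and conversely every such $t\le s$ arises. Here I would lean on the multiplicativity-style decompositions already used in the paper (the prime decomposition of Dyck paths and the factorization $\V(ua_1d\cdots ua_kd,\ldots)=\prod\V(ua_id,us_id)$), plus \eqref{eq:vprime}, to reduce bookkeeping.

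**Main obstacle.** The delicate point is the precise matching of the "type $\V$" conditions across the wrapper and the extra bottom level — ensuring that the constraint "agree on valleys at heights $\le h-j$" on the long chain is equivalent, after unwrapping and shifting, to the corresponding constraint on the short chain, and in particular that the choice of which low valleys of $a$ to raise (encoding $w\in[a,a^*]$) is genuinely free and independent of the rest of the chain data. In other words, the heart of the argument is showing that the first step of a type-$\V$ multichain out of $uad$ is completely unconstrained on the low valleys and completely determined (to be the identity) elsewhere, so that it contributes exactly the factor $|[a,a^*]|$-worth of freedom — but distributed as a sum over $w$ rather than a product, because the subsequent type-$\V$ condition depends on where those valleys landed.
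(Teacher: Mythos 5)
Your reduction is the same as the paper's: by \eqref{eq:vprime} it suffices to match $\bigsqcup_{t\le s}\{a\text{--}t \text{ multichains of type } \V\}$ with $\bigsqcup_{w\in[a,a^*]}\{w\text{--}s \text{ multichains of type } \V\}$. But the bijection you sketch rests on a misreading of the type-$\V$ condition, and it fails at exactly the point you flag as the heart of the argument. In a type-$\V$ multichain the constraint at step $j$ is that valleys at heights \emph{at most} $h-j$ are preserved, so the constraint is tightest at the beginning: the first step of a chain out of $uad$ must keep every valley at height $\le \hv(a)$ and may only raise the valleys of $uad$ at height $\hv(a)+1$, i.e., the \emph{highest} valleys of $a$ --- not the low valleys, which are frozen until the very last steps. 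Moreover, raising a subset of low valleys would parametrize $[a,a^\prime]$, not $[a,a^*]$; the interval $[a,a^*]$ corresponds to raising arbitrary subsets of interior valleys of $a$ at all heights. So the claimed correspondence ``extra bottom level $\leftrightarrow$ free choice of $w\in[a,a^*]$'' is not available: the first step is not free on the low valleys, it does not range over $[a,a^*]$, and stripping it does not leave a type-$\V$ chain from some $w\in[a,a^*]$ to $s$ (note also that such a chain must have length $\hv(w)$, which varies with $w$, so no fixed-length truncation can produce it). The only step that can be peeled off locally is the unconstrained \emph{last} one, and doing so is precisely identity \eqref{eq:vprime}, which you already assume.

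What is actually needed --- and what the paper does --- is a global transformation of the whole chain that uses $s$: given $a=\sigma_0\le\cdots\le\sigma_h=t\le s$, set $\sigma_{h+1}=s$ and form $\tau_i$ from $\sigma_i$ by raising every valley at height $j\le h-i$ that is not a valley of $\sigma_{h+1-j}$; then $w=\tau_0\in[a,a^*]$, the sequence $(\tau_i)$ is weakly increasing, it is constant up to index $h+1-\hv(w)$, and its tail is a type-$\V$ multichain from $w$ to $s$. In particular $w$ is determined by the entire chain together with $s$, not by a single ``extra'' step, which is why your local decomposition cannot be repaired without essentially reinventing this construction and verifying its invertibility.
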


\begin{proof}
In view of formula \eqref{eq:vprime}, it is enough to construct a bijection between the set of $a - t$ multichains of type $\V$ for all $t \le s$, and the set of $w - s$ multichains of type $\V$ for all $w \in [a, a^*]$.

A key property of Dyck paths that we will use throughout the proof is the following: If two Dyck paths $t, r$ of length $2n$ have the same valleys for every height at most $j$, then they coincide up to height $j+1$, and if $t \le r$ then every valley of $r$ at height $j+1$ is necessarily also a valley of $t$.
Note also that given a multichain $\sigma_0 \le \sigma_1 \le \cdots \le \sigma_h$ of type $\V$ and if $0 \le j \le i \le h$, then $\sigma_j, \sigma_i$ have the same valleys at every height at most $h-i$.

In order to exhibit the required bijection, for a multichain $a = \sigma_0 \le \sigma_1 \le \cdots \le \sigma_h = t$ of type $\V$ with $t \le s$, we set $\sigma_{h+1} = s$ and we first construct a sequence of Dyck paths $\tau_i$, $i \in [0,h+1]$, where $\tau_i$ is the Dyck path obtained by turning into peaks all valleys of $\sigma_i$ at height $j$ that are not also valleys of $\sigma_{h+1-j}$, for every $j \le h-i$. Clearly, we have $\tau_0 \in [a, a^*]$ and $\tau_{h+1} = s$.

Furthermore, $\tau_{i-1} \le \tau_i$ for every $i \in [h+1]$. Indeed, all valleys of $\sigma_{i-1}$, $\sigma_i$ at height $j \le h-i$ which turn into peaks for the construction of $\tau_{i-1}$, $\tau_i$ are the same, whereas the valleys of $\sigma_{i-1}$ at height $h-i+1$ that turn into peaks are not valleys of $\sigma_{h+1-(h-i+1)} = \sigma_i$, so that $\sigma_i$ (let alone $\tau_i$) passes at least two units above these valleys and hence, $\tau_{i-1}$ is weakly below $\tau_i$. 

Set $w = \tau_0$; hence, $w \in [a,a^*]$. Moreover, $\tau_{h+1-k} = w$, where $k = \hv(w)$. Indeed, clearly $\sigma_0, \sigma_{h+1-k}$ have the same valleys at every height at most $k-1$. It follows that the valleys of $\tau_0, \tau_{h+1-k}$ that are either at height at most $k-1$, or at height $k$ and have been created by the above construction are the same. Furthermore, if there exists a valley of $\tau_0$ at height $k$ that has not been created by the construction, then this valley is also a valley of both $\sigma_0$, $\sigma_{h+1-k}$ so that it is also a valley of $\tau_{h+1-k}$. Thus, since the height of the highest valley of $\tau_0$ is $k$, the paths $\tau_0$, $\tau_{h+1-k}$ have the same valleys, which gives $\tau_0 = \tau_{h+1-k}$.

We define $s_i = \tau_{i + (h+1-k)}$, $i \in [0,k]$. It is easy to check that $w = s_0 \le s_1 \le \cdots \le s_k = s$ is a multichain of type $\V$. 

For the converse we note that the peaks of $\tau_i$ at height $j+2$ generated according to the above construction from $\sigma_i$ for $j \le h-i$, are exactly these peaks of $\tau_i$ that are peaks of $\tau_0$ and not of $a$.

Now, for a multichain $w = s_0 \le s_1 \le \cdots \le s_k = s$ with $\hv(w) = k$, we define a multichain $w = \tau_0 \le \tau_1 \le \cdots \le \tau_{h+1} = s$ with $\tau_i = \begin{cases} s_0, & i \in [0,h+1-k]; \\ s_{i - (h+1-k)}, & i \in [h+2-k,h+1]. \end{cases}$

Finally, we define $\sigma_i^\prime$, $i \in [0,h]$ to be the Dyck path obtained by turning all peaks of $\tau_i$ at height $j+2$ that are also peaks of $w$ but not peaks of $a$ into valleys, for every $j \le h-i$.

Clearly, we have $\sigma_0^\prime = a$ and $\sigma_h^\prime \le s$.

Furthermore, we can analogously prove that $a = \sigma_0^\prime \le \sigma_1^\prime \le \cdots \le \sigma_h^\prime \le s$ is a multichain of type $\V$ and $\sigma_i = \sigma_i^\prime$ for every $i \in [0,h]$. 
\end{proof}

In order to illustrate the bijection in the proof of Proposition \ref{prop:vbijection} we give the following example.

\begin{Example}\label{example:vbijection}
Let $a = u^2 d u^2 d u^2 d^2 u^2 d^3 u d^2 u^3 d u^2 d u d^2 u d u d^3$,
$t = u^4 d u^2 d^2 u d u d u d^4 u^6 d u d^4 u d^3$ and $s = u^5 d u^3 d^3 u d^3 u d u^4 d u d u d^3 u d^4$. For the multichain $a = \sigma_0 \le \sigma_1 \le \sigma_2 \le \sigma_3 = t$ of Example \ref{example:vchain} (see Figure \ref{fig:vexample}), using the construction in the proof of Proposition~\ref{prop:vbijection}, we have that
\[ w = \tau_0 = \tau_1 = u^3 d u^2 d u d u d u d^2 u d^2 u d u^3 d u^2 d^2 u d^2 u d^3, \]
\[ \tau_2 = u^3 d u^3 d^2 u d u d^2 u d^2 u d u^4 d u^2 d^4 u d^3, \]
\[ \tau_3 = u^4 d u^2 d^2 u d u d u d^3 u d u^5 d u d^4 u d^3 \]
and
\[ \tau_4 = s. \]
Then, the multichain $w = s_0 \le s_1 \le s_2 \le s_3 = s$, where $s_i = \tau_{i+1}$, $i \in [3]$, is the corresponding $w-s$ multichain of type $\V$  (see Figure \ref{fig:vbijectionexample}). \end{Example}

\begin{figure}[ht]
\begin{center}
\psset{unit=1em}
\begin{pspicture}(-4,0)(34,10)

\psline[showpoints=true](0,0)(1,1)(2,2)(3,3)(4,2)(5,3)(6,4)(7,3)(8,4)(9,3)(10,4)(11,3)(12,4)(13,3)(14,2)(15,3)(16,2)(17,1)(18,2)(19,1)
(20,2)(21,3)(22,4)(23,3)(24,4)(25,5)(26,4)(27,3)(28,4)(29,3)(30,2)(31,3)(32,2)(33,1)(34,0) 
\rput[r](-1,0){$\tau_0=\tau_1$}

\psline[showpoints=true](0,1)(1,2)(2,3)(3,4)(4,3)(5,4)(6,5)(7,6)(8,5)(9,4)(10,5)(11,4)(12,5)(13,4)(14,3)(15,4)(16,3)(17,2)(18,3)(19,2)
(20,3)(21,4)(22,5)(23,6)(24,5)(25,6)(26,7)(27,6)(28,5)(29,4)(30,3)(31,4)(32,3)(33,2)(34,1) 
\rput[r](-1,1){$\tau_2$}

\psline[showpoints=true](0,2)(1,3)(2,4)(3,5)(4,6)(5,5)(6,6)(7,7)(8,6)(9,5)(10,6)(11,5)(12,6)(13,5)(14,6)(15,5)(16,4)(17,3)(18,4)(19,3)
(20,4)(21,5)(22,6)(23,7)(24,8)(25,7)(26,8)(27,7)(28,6)(29,5)(30,4)(31,5)(32,4)(33,3)(34,2) 
\rput[r](-1,2){$\tau_3$}

\psline[showpoints=true](0,3)(1,4)(2,5)(3,6)(4,7)(5,8)(6,7)(7,8)(8,9)(9,10)(10,9)(11,8)(12,7)(13,8)
(14,7)(15,6)(16,5)(17,6)(18,5)(19,6)(20,7)(21,8)(22,9)(23,8)(24,9)(25,8)(26,9)(27,8)(28,7)(29,6)(30,7)(31,6)(32,5)(33,4)(34,3) 
\rput[r](-1,3){$s=\tau_4$}
\end{pspicture}
\end{center}
\caption{The multichain produced in Example \ref{example:vbijection} from the multichain of type $\V$ of Example \ref{example:vchain}}
\label{fig:vbijectionexample}
\end{figure}
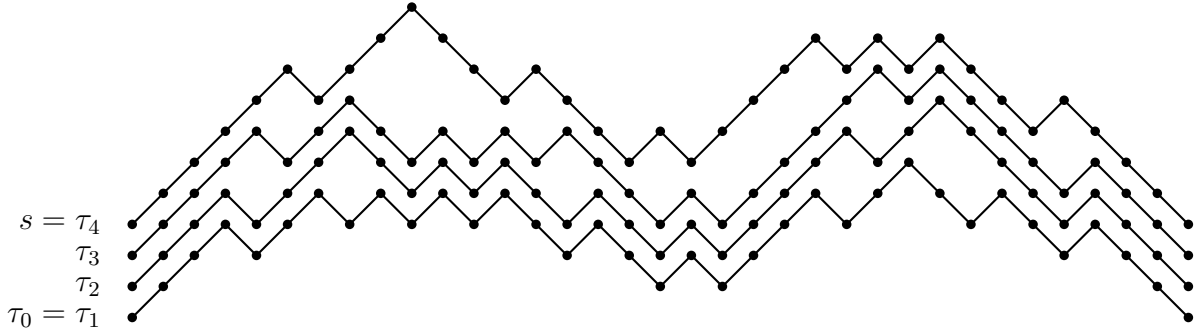

In the following result, we give a formula for the evaluation of $f$ on prime Dyck paths. In the proof, we use the following obvious consequence of Proposition \ref{prop:basicrec}:
\begin{equation}\label{eq:basicrecprimedyck}
f(u a d) = \sum\limits_{w \in [a, a^*]} f(w), \textrm{ for every Dyck path $a$.}
\end{equation}

\begin{Proposition}\label{prop:fprimedyck}
For every Dyck path $a \in \mathcal{D}$ we have that 
\begin{equation}\label{eq:fprimedyck} 
f(uad) = \sum\limits_{s \ge a} \V(a,s) I(s).  
\end{equation} 
\end{Proposition}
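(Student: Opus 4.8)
## Proof proposal for Proposition~\ref{prop:fprimedyck}

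The plan is to prove the identity $f(uad) = \sum_{s \ge a} \V(a,s) I(s)$ by setting up a bijection between the set of minimal chains with small intervals from $uad$ to $\1_n$ (where $n = |uad|$) and a suitable set of pairs, each pair consisting of an $a - s$ multichain of type $\V$ for some Dyck path $s \ge a$, together with a minimal small-interval chain from $u^{|s|/2} d^{|s|/2}$ to its top. The target count $\sum_{s\ge a}\V(a,s)I(s)$ is exactly the cardinality of that set of pairs, since $I(s) = |[s, u^{|s|/2}d^{|s|/2}]|$ counts the intervals, and $f$ evaluated at a pyramid is $1$, so by Proposition~\ref{prop:basicrec} and an easy induction (or by the Corollary characterizing $f(P)=1$) the number of minimal small-interval chains from $s$ up to its top is $I(s)$ — wait, more carefully: what we actually need is that the number of minimal chains from $s$ to $\1_{|s|}$ that stay inside the band of Dyck paths equals $I(s)$, which follows because such a chain is forced to pass through each element of $[s, u^{|s|/2}d^{|s|/2}]$ at most... this is the point to be careful about. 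Let me restructure.

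Here is the cleaner approach. I would start from formula~\eqref{eq:basicrecprimedyck}, $f(uad) = \sum_{w \in [a,a^*]} f(w)$, and then peel off one "level" at a time. The degree of $uad$ is governed by $\lv(uad) = -1$... no, $\lv(uad) \ge 0$ since $uad$ is (a prefix of) a Dyck path; in fact the valley structure of $a$ controls everything. The idea is: a minimal small-interval chain $uad = P_0 \le P_1 \le \cdots$ has, by Proposition~\ref{prop:basicrec}, its second term $P_1$ ranging over $[a, a^*]$ sitting inside $uad$'s filling — here I am conflating $uad$ with $a$ via $f(uad) = \sum_{w\in[a,a^*]}f(w)$. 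So I would induct on $\hv(a)$ (the height of the highest valley of $a$). When $\hv(a) = 0$, all valleys of $a$ are low, hence $a^* $ raises all of them except possibly a trailing one, and one checks directly that $f(uad) = \sum_{w\in[a,a^*]} f(w)$ collapses: every $w \in [a,a^*]$ is either a pyramid (if $a$ has no valleys, the sum has one term and $I(a) = 1$, $\V(a,a)=1$) or again a product of pyramids, and a short computation matches $\sum_{s \ge a}\V(a,s)I(s)$; the point is that when $\hv(a)=0$ the only $s$ with $\V(a,s)\ne 0$ is $s=a$ itself (a type-$\V$ multichain from $a$ of length $\hv(a)=0$ is trivial), so the right side is just $I(a) = |[a, u^{|a|/2}d^{|a|/2}]| = \sum_{w\in[a,a^*]}f(w) = f(uad)$, the last equality being~\eqref{eq:basicrecprimedyck} combined with the fact that all valleys of $a$ are at height $0$ so $f$ is constant $=1$ on $[a,a^*]$ by the Corollary — this needs the observation that every $w\in[a,a^*]$ still has all valleys at the same height. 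I will need to double-check this base case honestly; it may be that $\hv(a)=0$ already requires the inductive machinery below, in which case the true base case is $a=\varepsilon$.

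For the inductive step, I would split a minimal small-interval chain from $uad$ according to where it "finishes climbing past" the lowest unfilled level. Concretely, use~\eqref{eq:basicrecprimedyck} to write $f(uad) = \sum_{w\in[a,a^*]}f(w)$; then each $w$ is a Dyck path with $\lv(w) = \lv(a) = 0$... no — turning low valleys into peaks. The cleanest route: by Proposition~\ref{prop:vbijection}, $\V(uad,usd) = \sum_{w\in[a,a^*]}\V(w,s)$, which has exactly the same shape as~\eqref{eq:basicrecprimedyck}. So if I can establish $f(uad) = \sum_{s\ge a}\V(a,s)I(s)$ as an identity that is "compatible" with both recursions, I can push the induction through: assume $f(w) = \sum_{s\ge w}\V(w,s)I(s)$ for all $w \in [a,a^*]$ (these have strictly smaller $\hv$, since $a^*$ raises the lowest valleys), and compute
\[
f(uad) = \sum_{w\in[a,a^*]} f(w) = \sum_{w\in[a,a^*]}\sum_{s\ge w}\V(w,s)I(s) = \sum_{s}\Big(\sum_{w\in[a,a^*],\, w\le s}\V(w,s)\Big)I(s) = \sum_{s\ge a}\V(uad,usd)I(s).
\]
This is almost the claim, except the index on the right is $\V(uad, usd)$ rather than $\V(a,s)$. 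So the final step is to reconcile these: I would either reformulate the proposition with $\V(uad,usd)$ from the start (matching~\eqref{eq:fprimedyck} after re-indexing $s \mapsto uad'$ appropriately), or — more likely what the authors intend — observe that the sum $\sum_{s\ge a}\V(uad,usd)I(s)$ telescopes correctly because $I(usd)$ and $I(s)$ are related, or that there is an off-by-one in the level count that exactly absorbs the extra $u\cdot d$. The main obstacle, and the step I expect to be delicate, is precisely this bookkeeping of heights/levels — making sure the induction variable ($\hv$ of the relevant Dyck path) genuinely decreases under $a \mapsto w \in [a,a^*]$, and that the type-$\V$ multichain count attached to $a$ (rather than to $uad$) is the one that survives; I suspect the honest proof does a simultaneous induction establishing both $f(uad) = \sum_{s\ge a}\V(a,s)I(s)$ and the auxiliary identity relating $\V(a,s)$, $\V(uad,usd)$, and the interval structure, with Proposition~\ref{prop:vbijection} doing the heavy lifting at each level.
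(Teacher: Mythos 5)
Your argument breaks exactly at the step you yourself flag as delicate, and the problem is not an off-by-one that can be absorbed. The recursion \eqref{eq:basicrecprimedyck} expresses $f(uad)$ through the values $f(w)$ for $w\in[a,a^*]$, but the statement being proved is about $f(uwd)$, not $f(w)$; the ``induction hypothesis'' you invoke, $f(w)=\sum_{s\ge w}\V(w,s)I(s)$, is simply false. For $w=udud$ one has $f(w)=1$ (both valleys have height $0$), whereas the only $s$ with $\V(w,s)\ne 0$ is $s=w$ itself and $I(udud)=|[udud,uudd]|=2$. Consequently the identity your computation produces, $f(uad)=\sum_{s\ge a}\V(uad,usd)I(s)$, is also false: for $a=udud$ the left side is $f(uududd)=2$, while the right side is $\V(uududd,uududd)I(udud)+\V(uududd,uuuddd)I(uudd)=2+1=3$. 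Two further points collapse as well: the induction on $\hv$ cannot start, since $w=a$ itself lies in $[a,a^*]$ and $\hv(w)$ for $w\in[a,a^*]$ need not drop (it can even grow, e.g., $a=uuddud$ has $\hv(a)=0$ but $a^*=uududd$ has $\hv(a^*)=1$); and in your base case $f$ is not identically $1$ on $[a,a^*]$ when $\hv(a)=0$ (same example: $f(uududd)=2$), so that shortcut is unavailable too.

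The missing idea is how to bring the inductive hypothesis to bear on the summands $f(w)$ at all, and the paper does it by induction on $|a|$ together with a structural decomposition: writing $a=ua_1d\cdots ua_kd$, each $w\in[a,a^*]$ is split at the leftmost low valley it shares with $a$, as $w=uw_1ud\,w_2\cdots ud\,w_rd\,\beta$ with $w_i\in[a_i,a_i^*]$ and $\beta\in[b_r,b_r^*]$, $b_r=ua_{r+1}d\cdots ua_kd$. Multiplicativity of $f$ on Dyck paths factors $f(w)$, the induction hypothesis applies to the prime factor, \eqref{eq:basicrecprimedyck} identifies $\sum_{\beta\in[b_r,b_r^*]}f(\beta)=f(ub_rd)$, and Proposition~\ref{prop:vbijection} is applied componentwise to turn $\sum_{w_i\in[a_i,a_i^*]}\V(w_i,s_i)$ into $\V(ua_id,us_id)$; then the right-hand side $\sum_{s\ge a}\V(a,s)I(s)$ is shown to admit the very same decomposition by splitting each path counted by $I(s)$ at its leftmost low valley shared with $a$, and the two expressions coincide. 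Your instinct that Proposition~\ref{prop:vbijection} does the heavy lifting is correct, but it must be applied to the prime components after this decomposition; applied globally, as in your calculation, it produces $\V(uad,usd)=\sum_{t\le s}\V(a,t)$, which overcounts, as the example above shows.
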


\begin{proof}
We prove the required formula by induction on the length of $a$. 

Clearly, it holds for $a = \varepsilon$. Now, let $a = u a_1 d u a_2 d \cdots u a_k d$, where $a_i \in \mathcal{D}$, for $i \in [k]$, $k \in \mathbb{N}^*$. Each $w \in [a, a^*]$ has a common low valley with $a$. Assume that the $r$-th low valley of $a$, $r \in [k]$, is the leftmost such valley. Then, $w = u w_1 u d w_2 \cdots u d w_r d \beta$, where $w_i \in [a_i, a_i^*]$, for $i \in [r]$, $\beta \in [b_r, b_r^*]$ and $b_r = u a_{r+1} d u a_{r+2} d \cdots u a_k d$, $r \in [k]$; (note that $b_k = \varepsilon$). Then, using the induction hypothesis, equation \eqref{eq:basicrecprimedyck} and Proposition \ref{prop:vbijection}, we have that
\begin{align}
f(uad) & = \sum\limits_{w \in [a, a^*]} f(w) = \sum\limits_{r=1}^k \sum\limits_{\stackrel{w_i \in [a_i, a_i^*]}{i \in [r]}} f(u w_1 u d w_2 \cdots u d w_r d) \sum\limits_{\beta \in [b_r, b_r^*]} f(\beta) \nonumber \\
& = \sum\limits_{r=1}^k \sum\limits_{\stackrel{w_i \in [a_i, a_i^*]}{i \in [r]}} \sum\limits_{s \ge w_1 u d w_2 \cdots u d w_r} \V(w_1 ud w_2 \cdots ud w_r, s) I(s) f(u b_r d) \nonumber \\
& = \sum\limits_{r=1}^k \sum\limits_{\stackrel{w_i \in [a_i, a_i^*]}{i \in [r]}} \sum\limits_{\stackrel{s_i \ge w_i}{i \in [r]}} \left( \prod\limits_{i=1}^r \V(w_i, s_i) \right) I(s_1 u d s_2 \cdots u d s_r) f(u b_r d) \nonumber
\end{align}

\begin{align}\label{eq:fuad1}
& = \sum\limits_{r=1}^k \sum\limits_{\stackrel{s_i \ge a_i}{i \in [r]}} \left( \prod\limits_{i=1}^r \sum\limits_{w_i \in [a_i, a_i^*]} \V(w_i, s_i) \right) I(s_1 ud s_2 \cdots ud s_r) f(u b_r d) \nonumber \\
& = \sum\limits_{r=1}^k \sum\limits_{\stackrel{s_i \ge a_i}{i \in [r]}} \left( \prod_{i=1}^r \V( u a_i d, u s_i d) \right) I(s_1 u d s_2 \cdots u d s_r) f(u b_r d).
\end{align}

On the other hand, given a path $s \in \mathcal{D}$ with $s \ge a$, having the same low valleys with $a$, we set $s = u s_1 d u s_2 d \cdots u s_k d$, where $s_i \in \mathcal{D}$ and $s_i \ge a_i$ for every $i \in [k]$. Clearly, as before, we can decompose every path $c \in [s, u^{|s|/2} d^{|s|/2}]$ with respect to the leftmost common low valley with $a$, i.e., $c = u \phi d \chi$ where $\phi, \chi \in \mathcal{D}$, $\phi \ge s_1 u d s_2 \cdots u d s_r$ and $\chi \ge u s_{r+1} d u s_{r+2} d \cdots u s_k d$, for some $r \in [k]$.

It follows that
\begin{align}\label{eq:fuad2}
 \sum\limits_{s \ge a} \V(a,s) I(s) 
& = \sum\limits_{r=1}^k \sum\limits_{\stackrel{s_i \ge a_i}{i \in [k]}} \left( \prod\limits_{i=1}^k \V(u a_i d, u s_i d) \right) I(s_1 u d s_2 \cdots u d s_r) I(u s_{r+1} d u s_{r+2} d \cdots u s_k d) \nonumber \\
& = \sum\limits_{r=1}^k \sum\limits_{\stackrel{s_i \ge a_i}{i \in [r]}} \left( \prod_{i=1}^r \V( u a_i d, u s_i d) \right) I(s_1 u d s_2 \cdots u d s_r) \sum\limits_{t \ge b_r} \V(b_r, t) I(t) \nonumber \\
& = \sum\limits_{r=1}^k \sum\limits_{\stackrel{s_i \ge a_i}{i \in [r]}} \left( \prod_{i=1}^r \V( u a_i d, u s_i d) \right) I(s_1 u d s_2 \cdots u d s_r) f(u b_r d).
\end{align}

The required formula follows from \eqref{eq:fuad1} and \eqref{eq:fuad2}.
\end{proof}

\begin{Example} For $a = u^2 d^2 u^3 d u d u d^3$, from formula \eqref{eq:fprimedyck} we have that $f(u^3 d^2 u^3 d u d u d^4) = f(u a d) = \sum\limits_{s \ge a} \V(a,s) I(s)$.

Clearly, there are five $s \ge a$ which have the same low valleys with $a$, namely: $s_i = u^2 d^2 a_i$, $i \in [5]$ where $a_1 = u^3 d u d u d^3$, $a_2 = u^4 d^2 u d^3$, $a_3 = u^3 d u^2 d^4$, $a_4 = u^4 d u d^4$ and $a_5 = u^5 d^5$.

We can easily check by the definition of $\V$ that $\V(a,s_1) = 1$, $\V(a,s_2) = \V(a,s_3) = 2$, $\V(a,s_4) = 4$ and $\V(a,s_5) = 5$.

Furthermore, using formula \eqref{eq:interval1} we have that 
\begin{align*}
I(s_1) & = |\left[ (2,2,5,6,7,7,7), (7,7,7,7,7,7,7) \right]| = |\left[ (2,2,5,6), (7,7,7,7) \right]| \\ 
& = \begin{vmatrix} 
6 & \binom{6}{2} & \binom{3}{3} & \binom{2}{4} \\[0.25em] 
1 & 6 & \binom{3}{2} & \binom{2}{3} \\[0.25em] 
0 & 1 & 3 & \binom{2}{2} \\[0.25em] 
0 & 0 & 1 & 2 
\end{vmatrix} = 71, \\
I(s_2) & = |\left[ (2,2,6,6,7,7,7), (7,7,7,7,7,7,7) \right]| = |\left[
(2,2,6,6), (7,7,7,7) \right]| \\ 
& = \begin{vmatrix} 6 & \binom{6}{2} & \binom{2}{3} & \binom{2}{4} \\[0.25em] 1 & 6 & \binom{2}{2} & \binom{2}{3} \\[0.25em] 0 & 1 & 2 & \binom{2}{2} \\[0.25em] 0 & 0 & 1 & 2 \end{vmatrix} = 51, \\
I(s_3) & = |\left[ (2,2,5,7,7,7,7), (7,7,7,7,7,7,7) \right]| = | \left[
(2,2,5), (7,7,7) \right]| = \begin{vmatrix} 6 & \binom{6}{2} & \binom{3}{3} \\[0.25em] 1 & 6 & \binom{3}{2} \\[0.25em] 0 & 1 & 3 \end{vmatrix} = 46, \\
I(s_4) & = |\left[ (2,2,6,7,7,7,7), (7,7,7,7,7,7,7) \right]| = | \left[
(2,2,6), (7,7,7) \right]| = \begin{vmatrix} 6 & \binom{6}{2} & \binom{2}{3} \\[0.25em] 1 & 6 & \binom{2}{2} \\[0.25em] 0 & 1 & 2 \end{vmatrix} = 36, \\
I(s_5) & = |\left[ (2,2,7,7,7,7,7), (7,7,7,7,7,7,7) \right]| = | \left[ (2,2), (7,7) \right] | = \begin{vmatrix} 6 & \binom{6}{2} \\[0.25em] 1 & 6 \end{vmatrix} = 21.
\end{align*}

From the above we obtain that $f(u^3 d^2 u^3 d u d u d^4) = 514$. \end{Example}

In the following result we show that, for specific Dyck paths, the map $f$ is related to the zeta function. We recall that the zeta function of a poset $X$ is given by the formula 
\[ \zeta(x, y) = \begin{cases} 1, & \textrm{$x \le y$}; \\ 0, & \textrm{ otherwise,}\end{cases}\] for $x, y \in X$. Moreover, it is well known (e.g., see \cite[p.\ 263]{Stanley2011}) that $\zeta^k(x,y)$ counts the number of $x-y$ multichains in $X$ of length $k$, for $k \ge 0$.

\begin{Corollary}
If $a$ is a product of pyramids, then 
\[ f(u^k a d^k) = \zeta^{k+1}(a, u^{|a|/2} d^{|a|/2}), \]
for every $k \ge 0$.
\end{Corollary}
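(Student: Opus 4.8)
The plan is to prove this by induction on $k$, using the multiplicativity of $f$ over products of prime Dyck paths (established earlier) together with the recursive formulas for $f$ and for $\V$. First I would dispose of the base case $k=0$: the claim is $f(a) = \zeta(a, u^{|a|/2}d^{|a|/2})$, and since $a$ is itself a product of pyramids, all its valleys lie at height $0$, so by the Corollary following Proposition~\ref{prop:basicrec} we have $f(a)=1$; on the other hand $a \le u^{|a|/2}d^{|a|/2}$ always holds, so $\zeta(a, u^{|a|/2}d^{|a|/2})=1$ as well. For the inductive step I would write $u^{k}ad^{k} = u(u^{k-1}ad^{k-1})d$, so that $u^{k}ad^{k}$ is a prime Dyck path whose "inside" is $b := u^{k-1}ad^{k-1}$; note that $b$ is itself $u$ times a product of pyramids times $d$, hence (being prime, or a single pyramid block) $b^* = b$ because $b$ has no valley other than possibly its final one — more precisely, all internal valleys of $b$ sit strictly above height $0$ only when $k-1\ge 1$, and in any case $b$ has the special structure that makes $[b,b^*]$ trivial or controlled.

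The key computation is then to apply formula~\eqref{eq:basicrecprimedyck}, $f(uad)=\sum_{w\in[a,a^*]}f(w)$, with $a$ replaced by $b$, giving $f(u^{k}ad^{k}) = \sum_{w \in [b, b^*]} f(w)$. The heart of the matter is to identify this sum with $\zeta^{k+1}(a, u^{|a|/2}d^{|a|/2})$. Here I would instead route through Proposition~\ref{prop:fprimedyck}: $f(u^{k}ad^{k}) = f(ubd) = \sum_{s \ge b}\V(b,s)I(s)$. Because $b = u^{k-1}ad^{k-1}$ with $a$ a product of pyramids, the paths $s \ge b$ having the same low valleys as $b$ are exactly those of the form $u^{k-1}cd^{k-1}$ with $c \ge a$ a product of pyramids being "filled in", and one shows $\V(b, u^{k-1}cd^{k-1})$ counts length-$(k-1)$ multichains from $a$ to $c$ — or directly that $\V(u^{k-1}ad^{k-1}, u^{k-1}sd^{k-1})=\V(u^{k-2}ad^{k-2},\,\cdot\,)$-type telescoping via \eqref{eq:vprime} yields $\V(b,s)=\zeta^{k-1}(a,s')$ where $s = u^{k-1}s'd^{k-1}$. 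Finally one needs $I(u^{k-1}sd^{k-1}) = \zeta(s, u^{|s|/2}d^{|s|/2})\cdot(\text{something})$; in fact $I(s)$ for $s$ arising this way should collapse to $1$ when $s$ is itself maximal and more generally $I$ plays the role of one more $\zeta$ factor. Summing $\sum_{s' \ge a}\zeta^{k-1}(a,s')\,\zeta(s', u^{|a|/2}d^{|a|/2}) \cdot (\text{the }I\text{-contribution, }=\zeta(s',\cdot))$ reproduces $\zeta^{k+1}$ by the defining convolution property $\zeta^{k+1}=\zeta^{k-1}\cdot\zeta\cdot\zeta$.

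I expect the main obstacle to be pinning down exactly which interval-cardinality $I(s)$ equals in this restricted setting and showing it contributes precisely one extra $\zeta$-factor rather than a larger count — that is, proving $I(u^{k-1}cd^{k-1})$ (for $c$ a product of pyramids) equals the number of multichains of length one from $c$ up to $u^{|c|/2}d^{|c|/2}$, which is just $\zeta(c, u^{|c|/2}d^{|c|/2})=1$, so that the whole sum degenerates correctly. A cleaner alternative, which I would actually pursue, avoids $\V$ and $I$ entirely: induct using $f(u^{k}ad^{k}) = \sum_{w\in[b,b^*]}f(w)$ with $b = u^{k-1}ad^{k-1}$, observe $b^* = b$ (since deleting the outer $u\cdots d$ layers leaves $a$, whose only valleys are at height $0$ and are the low valleys, which $*$ does not touch, while the outer structure contributes no non-final valleys), hence $[b,b^*]=\{b\}$ and $f(u^kad^k)=f(b)=f(u^{k-1}ad^{k-1})$, forcing $f(u^kad^k)=f(a)=1$ for all $k$ — but that contradicts the intended $\zeta^{k+1}$ count unless $\zeta^{k+1}(a,u^{|a|/2}d^{|a|/2})=1$, which happens only when $a$ is already maximal. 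So $b^*\ne b$ in general, and the genuine content is that $[b, b^*]$ is in bijection with length-$1$ multichains issuing from the filled-in pyramid blocks; the careful bookkeeping of how turning low valleys of $b$ into peaks corresponds to one $\zeta$-step on $a$ is where the real work lies, and this is what I would need to execute in detail, combining it with the inductive hypothesis via the convolution $\zeta^{k+1}(a,m)=\sum_{a\le c\le m}\zeta^{k}(a,c)$.
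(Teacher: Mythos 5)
Your first half is on the paper's track: the base case $k=0$ via $f(a)=1$ (all valleys of a product of pyramids have height $0$), the application of Proposition~\ref{prop:fprimedyck} to $b=u^{k-1}ad^{k-1}$, and the identity $\V(u^{k-1}ad^{k-1},u^{k-1}sd^{k-1})=\zeta^{k-1}(a,s)$, which the paper proves directly from the shape of type-$\V$ multichains and which your telescoping via \eqref{eq:vprime} would also deliver (any path squeezed between $u^{j}ad^{j}$ and $u^{j}s'd^{j}$ is itself of the form $u^{j}td^{j}$). The genuine gap is in your treatment of $I$. You assert that $I(u^{k-1}cd^{k-1})$ "contributes precisely one extra $\zeta$-factor'', indeed that it equals $\zeta(c,u^{|c|/2}d^{|c|/2})=1$ for $c$ a product of pyramids, so that "the whole sum degenerates''. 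This is false and gives the wrong exponent: first, the paths $s$ occurring in $\sum_{s\ge a}\zeta^{k-1}(a,s)I(s)$ are \emph{all} Dyck paths above $a$ (for $k\ge 2$ one has $\zeta^{k-1}(a,s)>0$ whenever $s\ge a$), not only products of pyramids; second, $I(s)=\bigl|[s,u^{|s|/2}d^{|s|/2}]\bigr|$ is in general much larger than $1$ (e.g., $I(udud)=2$); third, if $I$ were identically $1$ the sum would equal $\sum_{s\ge a}\zeta^{k-1}(a,s)=\zeta^{k}(a,u^{|a|/2}d^{|a|/2})$, one power short. What you actually need is the (essentially definitional) fact $I(s)=\zeta^{2}(s,u^{|s|/2}d^{|s|/2})$ --- an interval's cardinality is the number of $2$-multichains from its bottom to its top --- together with $I(utd)=I(t)$, so that $I(u^{k-1}sd^{k-1})$ may be replaced by $I(s)$; then $\sum_{s\ge a}\zeta^{k-1}(a,s)\,\zeta^{2}(s,u^{|a|/2}d^{|a|/2})=\zeta^{k+1}(a,u^{|a|/2}d^{|a|/2})$ by convolution. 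Your displayed product $\zeta^{k-1}\cdot\zeta\cdot\zeta$ with both extra factors evaluated at the same pair $(s,\mathrm{max})$ is not a convolution (both factors are $1$), which is the same off-by-one error in another guise.

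The "cleaner alternative'' you say you would actually pursue does not repair this: you start from $b^{*}=b$, correctly notice yourself that this would force $f\equiv 1$ and is therefore wrong (for $a$ with at least two pyramid factors, $b=u^{k-1}ad^{k-1}$ has valleys at height $k-1$, all of which $*$ turns into peaks), and then leave the "careful bookkeeping'' of $[b,b^{*}]$ unexecuted. As it stands, the proposal contains no complete route to $\zeta^{k+1}$; fixing it amounts to reinstating the correct role of $I$ as above, which is precisely how the paper concludes after establishing the $\V$-identity.
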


\begin{proof}
For $k = 0$ the result obviously holds.

We now prove that for $k \ge 0$ we have that
\[ \V(u^k a d^k, u^k s d^k) = \zeta^k(a,s). \]
Indeed, we can easily see that $\hv(u^k a d^k) = k$, and that every $k$-multichain from $u^k a d^k$ to $u^k s d^k$ of type $\V$ is of the form \[ u^k a d^k = u^k \sigma_0 d^k \le \cdots \le u^k \sigma_{k-1} d^k \le u^k \sigma_k d^k = u^k s d^k, \]
producing the $a-s$ multichain 
\[ a = \sigma_0 \le \cdots \le \sigma_{k-1} \le \sigma_k = s. \]
Then, for $k \ge 1$, by Proposition \ref{prop:fprimedyck} we have that
\begin{align*}
f(u^k a d^k) 
& = \sum\limits_{s \ge u^{k-1} a d^{k-1}} \V(u^{k-1} a d^{k-1}, s) I(s) \\
& = \sum\limits_{s \ge a} \V(u^{k-1} a d^{k-1}, u^{k-1} s d^{k-1}) I(s) \\
& = \sum\limits_{s \ge a} \zeta^{k-1}(a,s) I(s) = \zeta^{k+1}(a, u^{|a|/2} d^{|a|/2}). \qedhere
\end{align*}
\end{proof}

Since $f$ is multiplicative on $\mathcal{D}$, by using formula \eqref{eq:fprimedyck} we can evaluate $f$ for every Dyck path. For Dyck prefixes that are not Dyck paths, it is enough to evaluate $f(duP)$, where $P$ is a Dyck prefix. We achieve this in formula \eqref{eq:fdyckprefix1} of the following Proposition, the proof of which, although it shares some common ideas with the proof of formula \eqref{eq:fprimedyck}, it is much more complicated. The difficulty lies in the fact that it is not possible to prove \eqref{eq:fdyckprefix1} directly by induction, so that we introduce and prove the more general equality \eqref{eq:fdyckprefix2} which concerns a pair of paths $(P,R)$ where a lexicographic induction applies. We also note that in this proof, we use several times the following obvious consequence of Proposition \ref{prop:basicrec}: 
\begin{equation}\label{eq:fdurec}
f(duP) = \sum\limits_{Q \in [P, \widetilde{P}]} f(d Q), \textrm{ for every Dyck prefix $P$}.
\end{equation}

\begin{Proposition}\label{prop:fdyckprefix}
For every Dyck prefix $P = a_0 u a_1 \cdots u a_k$, $k \ge 0$, $a_i \in \mathcal{D}$, $i \in [0,k]$, we have that
\begin{equation}\label{eq:fdyckprefix1}
f(d u P) = \sum \prod\limits_{i=0}^k \V(a_i, s_i) J(s_0 V_1),
\end{equation}
where the sum is taken over all sequences $(s_i)$, $i \in [0,k]$, of Dyck paths with $s_i \ge a_i$, and over all sequences of Dyck prefixes $(V_i)$, $i \in [k+1]$, with $V_i \ge u s_i V_{i+1}$, $i \in [k]$, and $V_{k+1} = \varepsilon$.
\end{Proposition}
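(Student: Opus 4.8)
The plan is not to attack \eqref{eq:fdyckprefix1} directly but to prove a more general identity from which it follows. The obstruction to a direct induction is that the last factor $J(s_0V_1)=\abs{[s_0V_1,u^{|s_0V_1|}]}$ always has the global maximum path as its top endpoint, so there is no parameter to recurse on. I would therefore set up \eqref{eq:fdyckprefix2} as the analogue of \eqref{eq:fdyckprefix1} for a pair of Dyck prefixes $(P,R)$ with $P\le R$, obtained by replacing $J(s_0V_1)$ with an interval cardinality $\abs{[s_0V_1,\,R']}$ where $R'$ is read off from $R$, and by adjusting the left-hand side so that the choice $R=u^{|P|}$ returns \eqref{eq:fdyckprefix1} verbatim. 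The induction would be lexicographic: first on $|P|+|R|$, then, at fixed total length, on the position of $(P,R)$ in the dual order — the same scheme used in the proof of Proposition~\ref{prop:suffixprefix}. The base cases ($P=\varepsilon$, and $P$ with all valleys at one height) are immediate from $f(\1_n)=1$, Proposition~\ref{prop:basicrec}, and the definition of $J$.

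For the inductive step on the left side I would apply \eqref{eq:fdurec} to write $f(duP)=\sum_{Q\in[P,\widetilde P]}f(dQ)$ and then sort the paths $Q\in[P,\widetilde P]$ by the leftmost valley that $Q$ still shares with $P$ — the exact analogue of sorting $w\in[a,a^*]$ by its leftmost common low valley in the proof of Proposition~\ref{prop:fprimedyck}. That shared valley is a non-initial return point of $dQ$, so Proposition~\ref{prop:suffixprefix} and its Corollary factor $f(dQ)$ into a product of two values of $f$, each taken on a path strictly shorter than $duP$ or strictly above it in the order, to which the induction hypothesis applies. Summing over all $Q$ with that valley fixed, the $\V$-weights of the block to its left collapse, via $\sum_{w\in[a,a^*]}\V(w,s)=\V(uad,usd)$ (Proposition~\ref{prop:vbijection}), to exactly the factors $\prod_i\V(a_i,s_i)$ called for in \eqref{eq:fdyckprefix1}, while summing over the position of that valley reassembles the nested family of Dyck prefixes $V_i\ge us_iV_{i+1}$.

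On the right side I would expand $\sum\prod_i\V(a_i,s_i)\abs{[s_0V_1,R']}$ by the companion decomposition: split each $c\in[s_0V_1,R']$ at its leftmost low valley shared with $P$ (as in the second half of the proof of Proposition~\ref{prop:fprimedyck}), use \eqref{eq:interval1}--\eqref{eq:interval2} to write that interval cardinality as a product of two such cardinalities, and use the product rule $\V(us_1d\cdots us_kd,\cdot)=\prod_i\V(us_id,\cdot)$ for $\V$ on products of prime Dyck paths. Matching this expansion term by term with the one produced for the left side closes the induction, and \eqref{eq:fdyckprefix1} is recovered at $R=u^{|P|}$.

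The step I expect to be the real obstacle is the bookkeeping around the coupled constraints $V_i\ge us_iV_{i+1}$: turning a valley of $P$ into a peak alters the set of returns of the path that follows it, and one must verify that exactly the needed new returns appear so that the recursive splitting of the Dyck prefixes $V_i$ stays synchronized with that of the Dyck paths $s_i$. This is precisely what makes the argument ``much more complicated'' than the one for prime Dyck paths, where no such dominating prefixes occur; a secondary difficulty is simply identifying the correct shape of \eqref{eq:fdyckprefix2} — the right definition of $R'$ and of the adjusted left side — so that the lexicographic induction actually goes through.
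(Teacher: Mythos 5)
Your high-level skeleton matches the paper's: generalize to a pair of Dyck prefixes, run a lexicographic induction on total length and then on the order, expand $f(duP)$ via \eqref{eq:fdurec}, sort $Q\in[P,\widetilde{P}]$ by the leftmost common low valley, factor with Proposition~\ref{prop:suffixprefix} and its Corollary, and collapse the $\V$-weights with Proposition~\ref{prop:vbijection}. But the one ingredient that actually makes the induction close --- the correct shape of the generalized identity \eqref{eq:fdyckprefix2} --- is exactly what you leave unspecified, and the shape you do sketch points the wrong way. You propose to keep the left side a single value of $f$ and to generalize the \emph{right} side by truncating $J(s_0V_1)$ to an interval $[s_0V_1,R']$ for some $R\ge P$; you never say what the adjusted left-hand object is, and a ``relative'' $f$ with an arbitrary top is not supported by the degree/filling machinery of Section~\ref{section:filling}. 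Moreover your diagnosis of the obstruction is off: the problem is not that $J$ has the maximum path as its top endpoint and so gives ``no parameter to recurse on.'' Run your own plan and you will see where it breaks: decomposing $Q\in[P,\widetilde{P}]$ (with $a_0=u\gamma_1d\cdots u\gamma_\nu d$) at the leftmost common low valley, the terms in which such a valley exists do factor, via Proposition~\ref{prop:suffixprefix}, into $f(uw_1ud\cdots w_rd)$ times a strictly shorter instance handled by \eqref{eq:fdurec}; but the residual terms, where every return of $a_0$ has been raised, are of the form $\sum_{Q_2\in[H,\widetilde{H}]}f(du\,w_1ud\cdots w_\nu ud\,Q_2)$ with $H=a_1ua_2\cdots ua_k$ --- a sum of $f$ over an interval of \emph{suffixes}, with no return at which Proposition~\ref{prop:suffixprefix} applies. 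No statement whose left side is a single $f(duP)$ (with whatever cap $R'$ on the right) covers such a term, so your induction cannot invoke itself there. The paper's \eqref{eq:fdyckprefix2} is designed precisely for this: its left side is $\sum_{Q\in[R,\widetilde{R}]}f(duPQ)$, i.e., the appended suffix varies over an interval $[R,\widetilde{R}]$, while the right side keeps $J$ intact and instead acquires extra factors $\V(b_i,t_i)$ and a longer chain of dominating prefixes $V_i$; formula \eqref{eq:fdyckprefix1} is then the case $R=\varepsilon$.

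Two smaller points. The valley at which you propose to split must be a \emph{low} (height-zero) valley for your factorization claim to be true: a shared valley of $P$ and $Q$ at positive height is not a return point of $dQ$, so Proposition~\ref{prop:suffixprefix} does not apply there. And on the right-hand side, the recombination you will eventually need is not an application of \eqref{eq:interval1}--\eqref{eq:interval2} so much as the identity $\sum_{r}I(t_1ud\cdots t_r)\,J(ut_{r+1}d\cdots ut_\nu d\,uW_1)+J(t_1ud\cdots t_\nu ud\,W_1)=J(s_0uW_1)$, obtained by classifying the paths above $s_0uW_1$ according to whether they have a low valley. In short: the strategy is the right one, but identifying the generalized statement is the heart of this proof, not a side difficulty, and the generalization you gesture at would not repair the step where the argument actually gets stuck.
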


\begin{proof} Let a Dyck prefix $R = b_0 u b_1 \cdots u b_\lambda$, $\lambda \ge 0$, $b_i \in \mathcal{D}$, $i \in [0,\lambda]$.

For the pair $(P, R)$ we consider the equality
\begin{equation}\label{eq:fdyckprefix2}
\sum\limits_{Q \in [R, \widetilde{R}]} f(du P Q) = \sum \prod\limits_{i=0}^k \V(a_i, s_i) \prod\limits_{i=0}^\lambda \V(b_i,t_i) J(s_0 V_1), 
\end{equation} 
where the sum is taken over all sequences $(s_i)$, $i \in [0,k]$, $(t_i)$, $i \in [0,\lambda]$, of Dyck paths with $s_i \ge a_i$ and $t_i \ge b_i$, and over all sequences of Dyck prefixes $(V_i)$, $i \in [k+\lambda +2]$,	 with $V_i \ge u s_i V_{i+1}$, $i \in [k]$, $V_{k+1} \ge t_0 V_{k+2}$, $V_{k+i+1} \ge u t_i V_{k+i+2}$, $i \in [\lambda]$, and $V_{k+\lambda+2} = \varepsilon$.

Clearly, formula \eqref{eq:fdyckprefix1} is a special case of equality \eqref{eq:fdyckprefix2} for $R = \varepsilon$. 

We prove equality \eqref{eq:fdyckprefix2} by induction. More precisely, assuming that \eqref{eq:fdyckprefix2} holds for every pair $(P_1, R_1)$ with either $|P_1| + |R_1| < |P| + |R|$, or $|P_1| + |R_1| = |P| + |R|$ with $|R_1| < |R|$, 
we prove that \eqref{eq:fdyckprefix2} also holds for the pair $(P,R)$.

We restrict ourselves to the case $P \notin \mathcal{D}$, since the case $P = a_0 \in \mathcal{D}$ is similar and easier to prove. We consider the following cases:

1.\ Assume that $R = u^\lambda$.

We first note that if $\lambda > 0$, then by the induction hypothesis we deduce that equality \eqref{eq:fdyckprefix2} holds for the pair $(P u^\lambda, \varepsilon)$, from which we can easily obtain that \eqref{eq:fdyckprefix2} holds also for the pair $(P, u^\lambda)$. Thus, we restrict ourselves to the case $R = \varepsilon$, so that now it is enough to prove formula \eqref{eq:fdyckprefix1}.

We set $H = a_1 u a_2 \cdots u a_k$ and we consider two subcases:

1(i).\ Assume that $a_0 = \varepsilon$, i.e., $P = u H$.
Then, by formula \eqref{eq:fdurec}, and by equality \eqref{eq:fdyckprefix2} for the pair $(\varepsilon, H)$, it follows that
\[ f(du P) = \sum\limits_{Q \in [H, \widetilde{H}]} f(du Q) = \sum \prod\limits_{i=1}^k \V(a_i, s_i) J(V_1), \]
where the sum it taken over all sequences $(s_i)$, $i \in [k]$, of Dyck paths with $s_i \ge a_i$ and over all sequences $(V_i)$, $i \in [k+1]$ with $V_1 \ge s_1 V_2$, $V_i \ge u s_i V_{i+1}$, $i \in [2,k]$, and $V_{k+1} = \varepsilon$.
Since $J(V_1) = J(u V_1)$, we can replace $V_1$ by $u V_1$, thus verifying formula \eqref{eq:fdyckprefix1}.

\smallskip

1(ii).\ Assume that $a_0 \ne \varepsilon$. We set $a_0 = u \gamma_1 d u \gamma_2 d \cdots u \gamma_\nu d$ and $\delta_r = u \gamma_{r+1} d u \gamma_{r+2} d \cdots u \gamma_\nu d$, $\nu \in \mathbb{N}^*$, $r \in [\nu]$.

Every $Q \in [P, \widetilde{P}]$ can be decomposed according to the leftmost common low valley with $P$ (see Figure \ref{fig:lclv}).

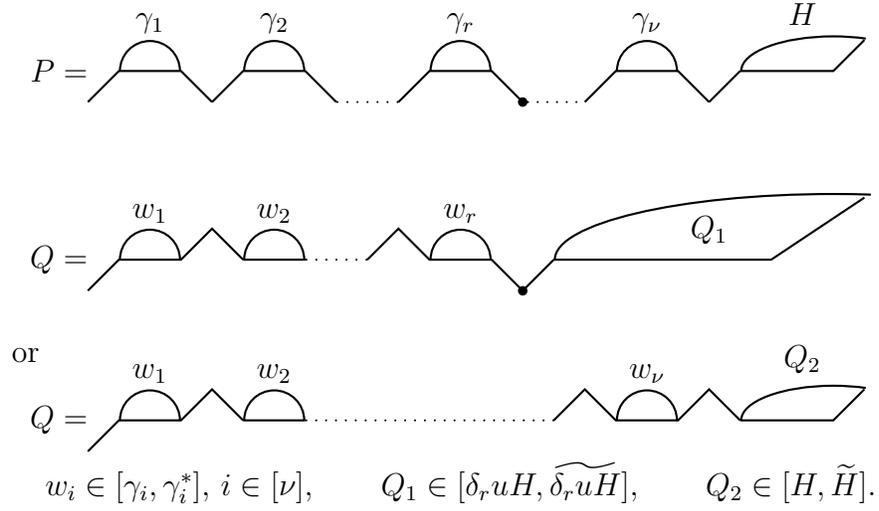
\begin{figure}[ht]
\psset{unit=1em}
\begin{center}
\begin{pspicture}(-2,0)(25,3)
\psline[](0,0)(1,1)(3,1)(4,0) \psellipticarc[](2,1)(1,1){0}{180} \rput[b](2,2.2){$\gamma_1$}
\psline[](4,0)(5,1)(7,1)(8,0) \psellipticarc[](6,1)(1,1){0}{180} \rput[b](6,2.2){$\gamma_2$}
\psline[linestyle=dotted](8,0)(10,0)
\psline[](10,0)(11,1)(13,1)(14,0) \psellipticarc[](12,1)(1,1){0}{180} \rput[b](12,2.2){$\gamma_r$}
\psline[linestyle=dotted]{*-}(14,0)(16,0)
\psline[](16,0)(17,1)(19,1)(20,0) \psellipticarc[](18,1)(1,1){0}{180} \rput[b](18,2.2){$\gamma_\nu$}
\psline[](20,0)(21,1)
\psellipticarc[](24,1)(3,1.15){45}{180} \psline[](21,1)(24,1)(25,2)
\rput[b](23,2.5){$H$}

\rput[r](0,1){$P=$}
\end{pspicture}

\vspace{2em}

\begin{pspicture}(-2,0)(25,4)
\rput[r](0,1){$Q=$}
\psline[](0,0)(1,1)(3,1)(4,2)(5,1) \psellipticarc[](2,1)(1,1){0}{180} \rput[b](2,2.2){$w_1$}
\psline[](5,1)(7,1) \psellipticarc[](6,1)(1,1){0}{180} \rput[b](6,2.2){$w_2$}
\psline[linestyle=dotted](7,1)(9,1)
\psline[](9,1)(10,2)(11,1)(13,1)(14,0)(15,1) \psellipticarc[](12,1)(1,1){0}{180} \rput[b](12,2.2){$w_r$} \psdots[](14,0)
\psellipticarc[](24,1)(9,2.15){60}{180} \psline[](15,1)(22,1)(25,3)
\rput[b](20,1.5){$Q_1$}
\end{pspicture}

\begin{pspicture}(-2,0)(25,2)
\rput[c](-2,0){or}
\end{pspicture}

\begin{pspicture}(-2,-1)(25,3)
\rput[r](0,1){$Q=$}
\psline[](0,0)(1,1)(3,1)(4,2)(5,1) \psellipticarc[](2,1)(1,1){0}{180} \rput[b](2,2.2){$w_1$}
\psline[](5,1)(7,1) \psellipticarc[](6,1)(1,1){0}{180} \rput[b](6,2.2){$w_2$}
\psline[linestyle=dotted](7,1)(15,1)
\psline[](15,1)(16,2)(17,1)(19,1)(20,2)(21,1) 
\psellipticarc[](18,1)(1,1){0}{180} \rput[b](18,2.2){$w_\nu$}
\psellipticarc[](24,1)(3,1.15){45}{180} \psline[](21,1)(24,1)(25,2)
\rput[b](23,2.5){$Q_2$}

\rput[c](12,-1){$w_i \in [\gamma_i, \gamma_i^*]$, $i\in [\nu], \qquad
Q_1 \in [\delta_r u H, \widetilde{\delta_r u H}], \qquad Q_2 \in [H, \widetilde{H}]$.}
\end{pspicture}
\end{center}
\caption{The decomposition of $Q \in [P, \widetilde{P}]$ according to the leftmost common low valley of $P,Q$.}
\label{fig:lclv}
\end{figure}

Using formula \eqref{eq:fdurec} and the above decomposition we have that
\begin{align}\label{eq:fdup1}
f(d u P) & = \sum\limits_{Q \in [P, \widetilde{P}]} f(dQ) \nonumber \\
& \begin{aligned} & = \sum\limits_{r=1}^{\nu} \sum\limits_{\stackrel{w_i \in [\gamma_i, \gamma_i^*]}{i \in [r]}} \sum\limits_{Q \in [\delta_r u H, \widetilde{\delta_r u H}]} f(d u w_1 u d \cdots w_{r-1} u d w_r d Q) \\ 
& + \sum\limits_{\stackrel{w_i \in [\gamma_i, \gamma_i^*]}{i \in [\nu]}} \sum\limits_{Q \in [H, \widetilde{H}]} f(d u w_1 u d \cdots w_{\nu} u d Q) 
\end{aligned} \nonumber \\
& \begin{aligned}
& = \sum\limits_{r=1}^{\nu} \sum\limits_{\stackrel{w_i \in [\gamma_i, \gamma_i^*]}{i \in [r]}} f(u w_1 u d \cdots w_{r-1} u d w_r d) \sum\limits_{Q \in [\delta_r u H, \widetilde{\delta_r u H}]} f(d Q)  \\ 
& + \sum\limits_{\stackrel{w_i \in [\gamma_i, \gamma_i^*]}{i \in [\nu]}} \sum\limits_{Q \in [H, \widetilde{H}]} f(d u w_1 u d \cdots w_{\nu} u d Q).
\end{aligned}
\end{align}

Then, using Proposition \ref{prop:fprimedyck}, and equality \eqref{eq:fdyckprefix2} for the pair $(w_1 u d \cdots w_{\nu} ud, H)$ we have that
\begin{align*}
f(duP) & = \sum\limits_{r=1}^{\nu} \sum\limits_{\stackrel{w_i \in [\gamma_i, \gamma_i^*]}{i \in [r]}} \sum\limits_{\stackrel{t_i \ge w_i}{i \in [r]}} \prod\limits_{i=1}^r \V(w_i, t_i) I(t_1 u d \cdots t_{r-1} u d t_r) f(d u \delta_r u H) \\ 
& \quad + \sum\limits_{w_i \in [\gamma_i, \gamma_i^*]} \sum\limits_{\stackrel{t_i \ge w_i}{i \in [\nu]}} \sum\limits_{\stackrel{s_i \ge a_i}{i \in [k]}} \sum\limits_{\substack{W_1 \ge s_1 V_2 \\ V_i \ge u s_i V_{i+1}, i \in [2,k] \\ V_{k+1} = \varepsilon}} \prod\limits_{i=1}^{\nu} \V(w_i, t_i) \prod\limits_{i=1}^k \V(a_i, s_i) J(t_1 u d \cdots t_\nu u d W_1) \nonumber \\ 
& = \sum\limits_{r=1}^{\nu} \sum\limits_{\stackrel{t_i \ge \gamma_i}{i \in [r]}} \left( \prod\limits_{i=1}^r \sum\limits_{\stackrel{w_i \in [\gamma_i, \gamma_i^*]}{i \in [r]}} \V(w_i, t_i) \right) I(t_1 u d \cdots t_{r-1} ud t_r) f(du \delta_r u H) \\
& \quad + \sum\limits_{\stackrel{t_i \ge \gamma_i, i \in [\nu]}{s_i \ge a_i, i \in [k]}} \sum\limits_{\substack{W_1 \ge s_1 V_2 \\ V_i \ge u s_i V_{i+1}, i \in [2,k] \\ V_{k+1} = \varepsilon}} \prod\limits_{i=1}^{k} \V(a_i, s_i) \left( \prod\limits_{i=1}^{\nu} \sum\limits_{\stackrel{w_i \in [\gamma_i, \gamma_i^*]}{i \in [\nu]}} \V(w_i, t_i) \right) J(t_1 u d \cdots t_\nu u d W_1). \end{align*}

Finally, using formula \eqref{eq:fdyckprefix1} for the path $\delta_r u H$ and Proposition \ref{prop:vbijection}, we obtain that
\begin{align}\label{eq:fdup2}
& f(duP)  \nonumber  \\
& \begin{aligned}
& = \sum\limits_{r=1}^{\nu} \sum\limits_{\stackrel{t_i \ge \gamma_i}{i \in [r]}} \prod\limits_{i=1}^r \V(u \gamma_i d, u t_i d) I(t_1 u d \cdots t_{r-1} ud t_r) \cdot \\ & \sum\limits_{\stackrel{t_i \ge \gamma_i, i \in [r+1,\nu]}{s_i \ge a_i, i \in [k]}} \sum\limits_{\stackrel{V_i \ge u s_i V_{i+1}, i \in [k]}{V_{k+1} = \varepsilon}} \V(\delta_r, u t_{r+1} d \cdots u t_{\nu} d) \prod\limits_{i=1}^k \V(a_i, s_i) J(u t_{r+1} d \cdots u t_{\nu} d V_1) \\
& + \sum\limits_{\stackrel{t_i \ge \gamma_i, i \in [\nu]}{s_i \ge a_i, i \in [k]}} \sum\limits_{\substack{W_1 \ge s_1 V_2 \\ V_i \ge u s_i V_{i+1}, i \in [2,k] \\ V_{k+1} = \varepsilon}} \prod\limits_{i=1}^k \V(a_i, s_i) \prod\limits_{i=1}^{\nu} \V(u \gamma_i d, u t_i d) J(t_1 ud \cdots t_{\nu} ud W_1) 
\end{aligned}  \nonumber \\
& \begin{aligned}
& = \sum\limits_{\substack{s_i \ge a_i, i \in [0,k] \\ s_0 = u t_1 d \cdots u t_{\nu} d}} \prod\limits_{i=0}^k \V(a_i, s_i)  \cdot \\ & \sum\limits_{\substack{W_1 \ge s_1 V_2 \\ V_i \ge u s_i V_{i+1}, \\ i \in [2,k] \\ V_{k+1} = \varepsilon}} \left( \sum\limits_{r=1}^{\nu} I(t_1 ud \cdots t_{r-1} ud t_r) J(u t_{r+1} d \cdots u t_{\nu} d u W_1) + J(t_1 ud \cdots t_{\nu} ud W_1) \right). 
\end{aligned}
\end{align}

Clearly, since the number $\sum\limits_{r=1}^{\nu} I(t_1 ud \cdots t_{r-1} ud t_r) J(u t_{r+1} d \cdots u t_{\nu} d u W_1)$ (resp., the number $J(t_1 ud \cdots t_{\nu} ud W_1)$) counts the set of all paths greater than or equal to $s_0 u W_1$ that have (resp., do not have) low valleys we obtain that
\[ \sum\limits_{r=1}^{\nu} I(t_1 ud \cdots t_{r-1} ud t_r) J(u t_{r+1} d \cdots u t_{\nu} d u W_1) + J(t_1 ud \cdots t_{\nu} ud W_1) = J(s_0 u W_1), \]
so that, by setting $V_1 = u W_1$ in formula \eqref{eq:fdup2}, we obtain  \eqref{eq:fdyckprefix1}.

2.\ Assume that $R \ne u^\lambda$. Then, we set $R = u^{\mu} b_{\mu} u b_{\mu+1} \cdots u b_{\lambda}$, where $\mu$ is the least integer such that $b_{\mu} \ne \varepsilon$.

It is enough to restrict ourselves to the case where $\mu = 0$ (i.e., $b_0 \ne \varepsilon)$, since the general case follows easily by applying equality \eqref{eq:fdyckprefix2} for the pair $(Pu^{\mu}, b_{\mu} u b_{\mu+1} \cdots u b_{\lambda})$.

Furthermore, we can restrict ourselves to the case where $R \notin \mathcal{D}$ (i.e., $\lambda > 0$), since the case where $R \in \mathcal{D}$ is similar and easier to prove. 

Set $b_0 = u \eta_1 d u \eta_2 d \cdots u \eta_{\xi} d$, $\theta_r = u \eta_{r+1} d u \eta_{r+2} d \cdots u \eta_{\xi} d$, $\xi \in \mathbb{N}^*$, $r \in [\xi]$, and $T = b_1 u b_2 \cdots u b_{\lambda}$.

By decomposing each $Q \in [R, \widetilde{R}]$ according to the leftmost low common valley with $R$, as before we have that
\begin{align}\label{eq:sumfduPQ}
\sum\limits_{Q \in [R, \widetilde{R}]} f(d u P Q) 
= & \sum\limits_{r=1}^{\xi} \sum\limits_{\stackrel{w_i \in [\eta_i, \eta_i^*]}{i \in [r]}} \sum\limits_{Q \in [\theta_r u T, \widetilde{\theta_r u T}]} f(d u P u w_1 u d \cdots w_{r-1} u d w_r d Q) \nonumber \\
& + \sum\limits_{\stackrel{w_i \in [\eta_i, \eta_i^*]}{i \in [\xi]}} \sum\limits_{Q \in [T, \widetilde{T}]} f(du P u w_1 u d \cdots w_{\xi} ud Q).
\end{align}

It is enough to prove that the first (resp., second) sum on the RHS of the above equality is equal to the part of the second sum in equality \eqref{eq:fdyckprefix2} which has (resp., does not have) low valleys.

Clearly, by applying \eqref{eq:fdyckprefix2} for the pair $(P u w_1 ud \cdots w_{r-1} ud w_r, \theta_r u T)$ for $r \in [\xi]$, we have that
\begin{align}\label{eq:sumfduPuwetc}
\begin{aligned}
& \sum\limits_{Q \in [\theta_r u T, \widetilde{\theta_r u T}]} \!\! f(du P u w_1 u d \cdots w_{r-1} u d w_r d Q) \\ & = \sum \prod\limits_{i=0}^k \V(a_i,s_i) \V(u w_1 ud \cdots w_{r-1} ud w_r d, u s d) \V(\theta_r, z_r) \prod\limits_{i=1}^{\lambda} \V(b_i, t_i) J(s_0 V_1),
\end{aligned}
\end{align}
where the sum is taken over all sequences $(s_i)$, $i \in [0,k]$, $(t_i)$, $i \in [\lambda]$, $s$, $z_r$ and $(V_1, \ldots, V_k$, $W_{k+1}, V_{k+2}, \ldots$, $V_{k+\lambda})$ with $s_i \ge a_i$, $t_i \ge b_i$, $s \ge w_1 u d w_2 ud \cdots w_{r-1} ud w_r$, $z_r \ge \theta_r$, $V_i \ge u s_i V_{i+1}$, $i \in [k-1]$, $V_k \ge u s_k u s d W_{k+1}$, $W_{k+1} \ge z_r V_{k+2}$, $V_{k+i+1} \ge u t_i V_{k+i+2}$, $i \in [\lambda]$, and $V_{k+\lambda+2} = \varepsilon$.

Clearly, we have
\[ \V(u w_1 u d \cdots w_{r-1} u d w_r d, u s d) = 
\sum\limits_{\phi \le s} \V(w_1 u d \cdots w_{r-1} ud w_r, \phi) = \sum\limits_{\stackrel{\phi_i \ge w_i}{i \in [r]}} \prod\limits_{i=1}^r \V(w_i, \phi_i),
\]
where $\phi_i \in \mathcal{D}$, $i \in [r]$, and $\phi_1 ud \cdots \phi_{r-1} ud \phi_r \le s$. Then, by substituting in equality \eqref{eq:sumfduPuwetc}, summing in terms of $(W_i)$, $i \in [r]$, changing the order of summation and using Proposition \ref{prop:vbijection}, in an analogous way as the one used in the proof of formula \eqref{eq:fdup1}, we can easily deduce that
\begin{align}\label{eq:doublesumfduPetc}
& \sum\limits_{\stackrel{w_i \in [\eta_i, \eta_i^*]}{i \in [r]}} \sum\limits_{Q \in [\theta_r u T, \widetilde{\theta_r u T}]} f(du P u w_1 ud \cdots w_{r-1} u d w_r d Q) \nonumber \\
& \qquad = \sum \prod\limits_{i=0}^k \V(a_i,s_i) \prod\limits_{i=1}^r \V(u \eta_i d, u \phi_i d) \V(\theta_r, z_r) \prod\limits_{i=1}^{\lambda} \V(b_i, t_i) J(s_0 V_1),
\end{align}
where $s_i \ge a_i$, $i \in [0,k]$, $t_i \ge b_i$, $i \in [\lambda]$, $\phi_i \ge \eta_i$, $i \in [r]$, $s \ge \phi_1 ud \cdots \phi_{r-1} u d \phi_r$, $z_r \ge \theta_r$, $V_i \ge u s_i V_{i+1}$, $i \in [k-1]$, $V_k \ge u s_k u s d W_{k+1}$, $W_{k+1} \ge z_r V_{k+2}$, $V_{k+i+1} \ge u t_i V_{k+i+2}$, $i \in [\lambda]$, and $V_{k + \lambda + 2} = \varepsilon$.

Then, by setting $t_0 = u \phi_1 d \cdots u \phi_r d z_r$ and $V_{k+1} = u s d W_{k+1}$, the RHS of equality \eqref{eq:doublesumfduPetc} becomes
\[ \sum \prod\limits_{i=0}^k \V(a_i, s_i) \prod\limits_{i=0}^{\lambda} \V(b_i, t_i) J(s_0 V_1), \]
where the sum is taken over all sequences $(s_i)$, $(t_i)$, $(V_i)$ with the same restrictions as in equality \eqref{eq:fdyckprefix2}, with the extra condition that the $r$-th low valley of $t_0$ (and $b_0$) coincides with the first low valley of $V_{k+1}$.

Next, by summing in terms of $r \in [\xi]$, we deduce that the first sum on the RHS of equality \eqref{eq:sumfduPQ} coincides with the part of the second sum in \eqref{eq:fdyckprefix2} for which $V_{k+1}$ has low valleys.

Finally, for the second sum on the RHS of equality \eqref{eq:sumfduPQ}, by applying \eqref{eq:fdyckprefix2} for the pair $(P u w_1 u d \cdots w_\xi ud, T)$, we have that
\begin{align*}
\sum\limits_{\stackrel{w_i \in [\eta_i, \eta_i^*]}{i \in [\xi]}} \sum\limits_{Q \in [T, \widetilde{T}]} f(du P u w_1 u d \cdots w_{\xi} ud Q) 
= \!\!\!\!\!\! \sum\limits_{\stackrel{w_i \in [\eta_i, \eta_i^*]}{i \in [\xi]}} \!\!\! \sum \prod\limits_{i=0}^k \V(a_i, s_i) \prod\limits_{i=1}^{\xi} \V(w_i, \phi_i) \prod\limits_{i=1}^\lambda \V(b_i, t_i) J(s_0 V_1), 
\end{align*}
where the sum is taken over all sequences $(s_i)$, $(\phi_i)$, $(b_i)$ with $s_i \ge a_i$, $i \in [0,k]$, $\phi_i \ge w_i$, $i \in [\xi]$, $b_i \ge t_i$, $i \in [\lambda]$, and over all sequences $(V_1,  \ldots, V_{k+1}, W_{k+2}, V_{k+3}, \ldots, V_{k+\lambda+2})$ with $V_i \ge u s_i V_{i+1}$, $i \in [k]$, $V_{k+1} \ge u \phi_1 ud \cdots \phi_{\xi} ud W_{k+2}$, $W_{k+2} \ge t_1 V_{k+3}$, $V_{k+i+1} \ge u t_i V_{k+i+2}$, $i \in [2,\lambda]$, and $V_{k+\lambda+2} = \varepsilon$.

By changing the order of summation on the RHS of the above equality, using Proposition \ref{prop:vbijection}, as before, and setting $t_0 = u \phi_1 d \cdots u \phi_\xi d$, $V_{k+2} = u W_{k+2}$ we deduce easily that the second sum on the RHS of equality \eqref{eq:sumfduPQ} coincides with the part of the second sum in \eqref{eq:fdyckprefix2} for which $V_{k+1}$ has no valleys, which completes the proof of equality \eqref{eq:fdyckprefix2}. \end{proof}

\begin{Example}
For $P = u^2 d u^2 d^3 u^2 d$, by formula \eqref{eq:fdyckprefix1} we have that $f(d u^3 d u^2 d^3 u^2 d) = f(d u P) = \sum \V(u^2 d u^2 d^3, s_0) \V(ud, s_1) J(s_0 V_1)$, where the sum is taken over all prime Dyck paths $s_0, s_1$ with $s_0 \ge u^2 d u^2 d^3$ and $s_1 \ge ud$, and all Dyck prefixes $V_1$ with $V_1 \ge u s_1$. 

It is easy to check that $s_0 = u^2 d u^2 d^3$ or $s_0 = u^4 d^4$, $s_1 = ud$ and $V_1 = u^2 d$ or $V_1 = u^3$.

Furthermore, for these $s_0, s_1$ we have that $\V(u^2 d u^2 d^3, s_0) = 1$ and $\V(ud, s_1) = 1$ so that
\begin{align*} 
f(d u^3 d u^2  d^3 u^2 d)  
& = J(u^2 d u^2 d^3 u^2 d) + J(u^2 d u^2 u^3 u^3) + J(u^3 d u d^3 u^2 d) + J(u^3 d u d^3 u^3)  \\ 
& \quad + J(u^4 d^4 u^2 d) + J(u^4 d^4 u^3).
\end{align*}

Then, by formulas \eqref{eq:interval2} and \eqref{eq:interval1} we obtain that
\begin{align*}
& J(u^2 d u^2 d^3 u^2 d) \\
& = |[u^{11}, u^{11}]| + |[u^2 d u^8, u^{10} d]| + |[u^2 d u^2 d u^5, u^9 d^2]| + |[u^2 d u^2 d^2 u^4, u^8 d^3]| \\ 
& \quad + |[u^2 d u^2 d^3 u^3, u^7 d^4]| + |[u^2 d u^2 d^3 u^2 d, u^6 d^5]| \\
& = 1 + |[(2),(10)]| + |[(2,4),(9,9)]| + |[(2,4,4),(8,8,8)]| \\ 
& \quad + |[(2,4,4,4),(7,7,7,7)]| + |[(2,4,4,4,6),(6,6,6,6,6)]| \\
& = 1 + 11 - 2 + \begin{vmatrix} 8 & \binom{6}{2} \\[0.25em] 1 & 6 \end{vmatrix} + \begin{vmatrix} 7 & \binom{5}{2} & \binom{5}{3} \\[0.25em] 1 & 5 & \binom{5}{2} \\[0.25em] 0 & 1 & 5 \end{vmatrix} + \begin{vmatrix} 6 & \binom{4}{2} & \binom{4}{3} & \binom{4}{4} \\[0.25em] 1 & 4 & \binom{4}{2} & \binom{4}{3} \\[0.25em] 0 & 1 & 4 & \binom{4}{2} \\[0.25em] 0 & 0 & 1 & 4 \end{vmatrix} + \begin{vmatrix} 5 & \binom{3}{2} & \binom{3}{3} & \binom{3}{4} & \binom{1}{5} \\[0.25em] 1 & 3 & \binom{3}{2} & \binom{3}{3} & \binom{1}{4} \\[0.25em] 0 & 1 & 3 & \binom{3}{2} & \binom{1}{3} \\[0.25em] 0 & 0 & 1 & 3 & \binom{1}{2} \\[0.25em] 0 & 0 & 0 & 1 & 1 \end{vmatrix} \\
& = 1 + 9 + 33 + 65 + 75 + 35 = 218.
\end{align*}

Similarly, we find that $J(u^2 d u^2 d^3 u^3) = 183$, $J(u^3 d u d^3 u^2 d) = 166$, $J(u^3 d u d^3 u^3) = 141$, $J(u^4 d^4 u^2 d) = 114$ and $J(u^4 d^4 u^3) = 99$, so that we obtain that $f(d u^3 d u^2 d^3 u^2 d) = 921$. \end{Example}

In the last result, we show that for specific paths, the map $f$ is also related to the zeta function.

\begin{Corollary}
If $a$ is a product of pyramids, then
\[ f(d u^k a) = \zeta^{k+1}(a, u^{|a|}), \]
for every $k \ge 1$.
\end{Corollary}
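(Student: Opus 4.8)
The plan is to feed the path $P=u^{k-1}a$ into the general formula \eqref{eq:fdyckprefix1} of Proposition~\ref{prop:fdyckprefix}, simplify it drastically using the hypothesis on $a$, and then collapse the remaining sum by a bijection that strips off leading upsteps.

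First I would record the simplification coming from the hypothesis. Since $a$ is a product of pyramids, every valley of $a$ lies at height $0$, so $\hv(a)=0$; hence an $a$--$s$ multichain of type $\V$ has length $\hv(a)=0$, i.e.\ it is just $(a)$, and therefore $\V(a,s)=[a=s]$ for every $s\in\mathcal D$. For $k=1$ the statement is immediate: Proposition~\ref{prop:fdyckprefix} with $P=a$ gives $f(dua)=\sum_{s\ge a}\V(a,s)J(s)=J(a)=|[a,u^{|a|}]|=\zeta^{2}(a,u^{|a|})$. For $k\ge 2$ the Dyck prefix $P=u^{k-1}a$ has the decomposition $a_0ua_1\cdots ua_{k-1}$ with $a_0=\cdots=a_{k-2}=\varepsilon$ and $a_{k-1}=a$. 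Plugging this into \eqref{eq:fdyckprefix1}, the requirement $s_i\ge a_i=\varepsilon$ forces $s_i=\varepsilon$ for $i\le k-2$, the factor $\V(a,s_{k-1})$ forces $s_{k-1}=a$, all surviving $\V$-factors equal $1$, and $s_0=\varepsilon$ turns $J(s_0V_1)$ into $J(V_1)$; what remains is
\[
f(du^{k}a)=\sum J(V_1),
\]
the sum over all sequences of Dyck prefixes $V_1,\dots,V_{k-1}$ with $V_i\ge uV_{i+1}$ for $i\in[k-2]$ and $V_{k-1}\ge ua$.

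Next I would expand $J(V_1)=|[V_1,u^{|V_1|}]|$, so that the right-hand side becomes the number of sequences $(V_0,V_1,\dots,V_{k-1})$ of Dyck prefixes with $V_0\ge V_1$, with $V_i\ge uV_{i+1}$ for $i\in[k-2]$, and with $V_{k-1}\ge ua$. A length count gives $|V_i|=|a|+k-i$ for $i\in[k-1]$ and $|V_0|=|a|+k-1$. The chain of inequalities pins down the start of each $V_i$: from $V_{k-1}\ge ua$ we get $V_{k-1}=u\widehat V_{k-1}$ with $\widehat V_{k-1}\ge a$; then inductively $V_i\ge uV_{i+1}=u^{k-i}\widehat V_{i+1}$, and since the right-hand side reaches height $k-i$ after $k-i$ steps (the maximum possible) this forces $V_i=u^{k-i}\widehat V_i$ with $\widehat V_i\ge\widehat V_{i+1}$; finally $V_0\ge V_1=u^{k-1}\widehat V_1$ forces $V_0=u^{k-1}\widehat V_0$ with $\widehat V_0\ge\widehat V_1$. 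Each $\widehat V_i$ has length exactly $|a|$ and satisfies $\widehat V_i\ge a$, hence is a Dyck prefix, and the map $(V_0,\dots,V_{k-1})\mapsto(\widehat V_0,\dots,\widehat V_{k-1})$ is a bijection onto the set of chains $a\le\widehat V_{k-1}\le\cdots\le\widehat V_1\le\widehat V_0$ in $\mathcal P_{|a|}$; its inverse re-prepends the prescribed blocks $u^{k-i}$, which keeps a Dyck prefix and restores every constraint. Since $\widehat V_0\le u^{|a|}$ automatically, such a chain is precisely a multichain $a=z_0\le z_1\le\cdots\le z_{k+1}=u^{|a|}$ of length $k+1$, and the number of these is $\zeta^{k+1}(a,u^{|a|})$, completing the proof.

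The routine parts are the substitution into \eqref{eq:fdyckprefix1} and the check that re-prepending the blocks $u^{k-i}$ reproduces all the constraints; the step requiring care is the forced structure of the $V_i$: one must keep straight that the ``$u$'' in the comparison $V_i\ge uV_{i+1}$ is an extra prepended step and not the leading step of $V_{i+1}$, so that it pins down exactly $k-i$ leading upsteps of $V_i$ and, after their removal, leaves a path of length precisely $|a|$.
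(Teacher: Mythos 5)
Your proof is correct and takes essentially the same route as the paper: apply Proposition~\ref{prop:fdyckprefix} to $P=u^{k-1}a$, use the product-of-pyramids hypothesis to force $\V(a_i,s_i)=[s_i=a_i]$ and reduce to $\sum J(V_1)$ over the constrained sequences $(V_i)$, and then identify these sequences with multichains of length $k+1$. The only difference is bookkeeping: the paper prepends $u$-blocks to read the sequence as a multichain from $u^{k-1}a$ to $V_1$ and cancels the common prefix at the end via $\zeta^{k+1}(u^{k-1}a,u^{k-1+|a|})=\zeta^{k+1}(a,u^{|a|})$, whereas you strip the forced leading $u$-blocks and land directly on multichains from $a$ to $u^{|a|}$ (your explicit argument that each $V_i$ must begin with $u^{k-i}$ is exactly the point the paper leaves implicit).
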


\begin{proof}
We apply Proposition~\ref{prop:fdyckprefix} for the Dyck prefix $P = u^{k-1} a$. Firstly, for $k = 1$ we obtain that
\[ f(du a) = J(a) = \zeta^2(a,u^{|a|}). \]
Next, for $k \ge 2$ we obtain that
\[ f(du a) = \sum J(V_1), \]
where the sum is taken over all sequences $(V_i)$, $i \in [k]$ of Dyck prefixes with $V_i \ge u V_{i+1}$, $i \in [k-1]$, $V_{k-1} \ge u a V_k$ and $V_k = \varepsilon$.

Each such sequence $(V_i)$, $i \in [k]$, produces a unique multichain $(W_i)$, $i \in [0,k-1]$, of length $k-1$ from $u^{k-1} a$ to $V_1$, defined by
$W_0 = u^{k-1}a$ and $W_i = u^{k-i-1} V_{k-i}$, $i \in [k-1]$.

Then, formula \eqref{eq:fdyckprefix1} gives that
\[ f(dua) = \sum\limits_{V_1 \ge u^{k-1}a} \zeta^{k-1}(u^{k-1}a, V_1) J(V_1) = \zeta^{k+1}(u^{k-1}a, u^{k-1+|a|}) = \zeta^{k+1}(a, u^{|a|}). \qedhere \]
\end{proof}

\bigskip
\hrule
\bigskip

\noindent 2010 {\it Mathematics Subject Classification}: Primary 05A19, Secondary 05A15, 06A07.

\noindent \emph{Keywords: } binary path, Dyck path, partial order of paths, chain of paths.

\bigskip
\hrule
\bigskip

\noindent
(Concerned with sequences
\seqnum{A000108},
\seqnum{A000213}, and
\seqnum{A001405}.)


\begin{thebibliography}{99}

\bibitem{BP1995}
M. Barnabei and E. Pezzoli, M\"obius functions, in J. P. S. Kung ed., {\em Gian-Carlo Rota on Combinatorics}, Birkhauser, 1995, pp.\ 83--104.

\bibitem{GP1996}
C. Germain and J. Pallo, Two shortest path metrics on well-formed parentheses strings, {\em Inform. Process. Lett.} {\bf 60} (1996), 283--287.

\bibitem{KnuthV4A}
D. E. Knuth, {\em The Art of Computer Programming}, Vol.\ 4A, Addison-Wesley, 2011.

\bibitem{Krattenthaler}
C. Krattenthaler, Lattice path enumeration, in M. Bona ed., {\em Handbook of Enumerative Combinatorics}, CRC Press, 2015, pp.\ 589--678.

\bibitem{Lindstrom1970}
B. Lindstr\"om, Conjecture on a theorem similar to Sperner's, in R. Guy ed., {\em Combinatorial Structures and Their Applications}, Gordon and Breach, New York, 1970, p.\ 241.

\bibitem{MSTT2018}
K. Manes, I. Tasoulas, A. Sapounakis, and P. Tsikouras, Counting pairs of noncrossing binary paths: A bijective approach, {\em Discrete Math.} {\bf 342} (2019), 352--359.

\bibitem{MerrisRoby2005}
R. Merris and T. Roby, The lattice of threshold graphs, {\em JIPAM. J. Inequal. Pure Appl. Math.}, {\bf 6}(1) (2005), Article 2.

\bibitem{PR1985}
J. Pallo and R. Racca, A note on generating binary trees in $A$-order and $B$-order, {\em Int. J. Comput. Math.} {\bf 18} (1985), 27--39.

\bibitem{STT2006}
A. Sapounakis, I. Tasoulas, and P. Tsikouras, 
On the dominance partial ordering of Dyck paths,
{\em J. Integer Seq.} {\bf 9} (2006), \#06.2.5.

\bibitem{OEIS}
N. J. A. Sloane, {\em The On-Line Encyclopedia of Integer Sequences}, \url{https://oeis.org}

\bibitem{Stanley1991}
R. P. Stanley, Some applications of algebra to combinatorics, {\em Discrete Appl. Math.} {\bf 34} (1991), 241--277.

\bibitem{Stanley2011}
R. P. Stanley, {\em Enumerative Combinatorics}, Vol. 1, 2nd edition, Cambridge University Press, 2011.

\end{thebibliography}
\end{document}